\newtheorem{theorem}{Theorem}[section]
\newtheorem{lemma}[theorem]{Lemma}
\newtheorem{proposition}[theorem]{Proposition}
\newtheorem{remark}[theorem]{Remark}
\newtheorem{definition}[theorem]{Definition}
\newenvironment{proof}[1][Proof]{\noindent\textbf{#1.} }{\hfill \rule{0.5em}{0.5em}}
\newcommand {\qed}{\hfill $\blacksquare$ \vspace{0.5cm}}
\renewcommand{\qed}{\hfill\rule{2mm}{2mm}}
\begin{document}
\title{A nonhomogeneous and critical Kirchhoff-Schr\"odinger type equation in $\mathbb R^4$ involving vanishing potentials}

\author{Francisco S. B. Albuquerque\\
Universidade Estadual da Para\'iba\\
Departamento de Matem\'atica\\
CEP: 58700-070, Campina Grande - PB, Brazil\\
\textsf{fsiberio@cct.uepb.edu.br}\\
\\
Marcelo C. Ferreira\footnote{Corresponding author}\\
Universidade Federal de Campina Grande\\
Unidade Acad\^emica de Matem\'atica\\
CEP: 58429-900, Campina Grande - PB, Brazil\\
\textsf{marcelo@mat.ufcg.edu.br}}  

\date{}

\maketitle

\begin{abstract}
  In this paper we establish the existence and multiplicity of  weak solutions to a  Kirchhoff-Schr\"odinger type problem in $\mathbb R^4$ involving a critical nonlinearity and a suitable small  perturbation. The fact that Sobolev exponent is $2^*=4$ in four dimensions, causes difficulties to treat our study from a variational viewpoint. Some tools we used in this paper are the Mountain-Pass and Ekeland's Theorems and the Lions' Concentration Compactness Principle.
\end{abstract}

{\scriptsize{\bf Keywords:} Kirchhoff-Schr\"odinger equations; Nonlocal problems; Vanishing potentials; Variational methods; Critical growth.}

{\scriptsize{\bf 2010 Mathematics Subject Classification:}} 35B33, 35J20, 35J60.


\section{Introduction and main results}

In this paper we establish the existence of mountain-pass and negative energies type solutions to the following nonhomogeneous Kirchhoff-Schr\"odinger type problem:
\begin{equation*}
     \begin{cases}
   - \left( a+b \displaystyle\int_{\mathbb R^4} |\nabla u|^2 \, \text{d}x \right) \Delta u+V(x)u= \mu K(x)|u|^{q-2}u+ u^3+h(x), \ x \in \mathbb R^4,  \\
   u \in D^{1,2}(\mathbb R^4), \hfill (P_{\mu}) 
\end{cases}
\end{equation*}
where $a,b > 0$ are constants, $\mu>0$ is a parameter, $q\in (2,4)$, $h\in L^{\frac{4}{3}}(\mathbb R^4)$, and the weights $V,\ K\colon \mathbb R^4\to \mathbb R^+$ satisfy the hypotheses
	
\begin{enumerate}
 \item[$(K)$] $K  \in L^\infty(\mathbb R^4)$ and for any sequence of Borel sets  $(A_n)$ in  $\mathcal{P}( \mathbb R^4)$  such that $|A_n|\leq R$, for all $n$ and some $R>0$, it is fulfilled
 $$
 \lim_{r\rightarrow+\infty}\int_{A_n\cap B_r^{c}(0)}K(x)\, \text{d}x=0,\quad \text{uniformly in}\ n\in\mathbb{N},
 $$
 where $|\cdot|$ means the Lebesgue measure in $\mathbb R^4$;
 \item[$(VK)$] The condition $$\frac{K}{V}\in L^{\infty}(\mathbb R^4)$$
 occurs. 
\end{enumerate}

Simple examples of $V$ and $K$ satisfying $(K)$ and $(VK)$ are given by
$$
V(x)=\frac{1}{1+|x|^{\alpha}}\quad\text{and}\quad K(x)=\frac{1}{1+|x|^{\beta}},
$$
with $\beta>\alpha>4$. The potentials above belong to a class entitled vanishing at infinity (or zero mass case). After the work by  Ambrosetti, Felli and Malchiodi in \cite{afm}, lots of types of stationary nonlinear Schrödinger equations involving vanishing potentials at infinity have been studied in $\mathbb R^N$ $(N\geq 2)$ and, in the vast list of references in this aspect, we may cite \cite{AlvesSouto,NW,SWW1,SWW2} and the references therein.

We point out that the hypotheses $(K)$ and $(VK)$ were  introduced by Alves and Souto in \cite{AlvesSouto} and the authors observed they are more general than that ones considered earlier by Ambrosetti, Felli and Malchiodi in \cite{afm} in order to get compactness embedding from $E$ to $L_K^p(\mathbb{R}^4)$ (see the definitions below).  For example, if $B_n$ is a disjoint sequence of open balls in $\mathbb R^4$ centered in $x_n=(n,0,0,0)$ and $f\colon\mathbb R^4\to\mathbb R^+$ is defined as 
$$
f(x)=0, ~\forall x\in \mathbb R^4\setminus \bigcup_{n=1}^\infty B_n, ~~ f(x_n)=1~~\text{and}~~\int_{B_n} f(x)\, \text{d}x = \frac{1}{2^n},
$$ 
then, a straightfoward calculation shows 
$$
K(x)=V(x)=f(x)+\frac{1}{\ln(2+|x|)}.
$$
satisfy the hypotheses $(K)$ and $(VK)$, but $K$ does not vanish at infinity.
\medskip

Moreover, we will also assume the following assumption:
\medskip
 
\noindent $(S)$ The coefficient $b$ satisfies $b > 1/S^2$, where $S$ is the best Sobolev constant for the embedding of the Sobolev space $D^{1,2}\left( \mathbb R^4\right)$ into $L^4\left( \mathbb R^4\right)$, that is, 
$$
         S=\inf_{\stackrel{u\in D^{1,2}\left( \mathbb R^4\right)}{u\neq 0}} \frac{\displaystyle\int_{\mathbb R^4} \left|\nabla u\right|^2\,\text{d}x}{\left(\displaystyle\int_{\mathbb R^4} u^4\,\text{d}x\right)^{\frac{1}{2}}}.
$$

A problem as $(P_{\mu})$ is called \emph{nonlocal} due to the presence of the term  $\left(\displaystyle\int_{\mathbb R^4} |\nabla u|^2 \, \text{d}x\right)\Delta u$ in its formulation which implies that the equation in $(P_{\mu})$ is no longer a pointwise identity. As we will see later, this phenomenon causes some mathematical difficulties and consequently motivates the study of such a class of problems from the mathematical viewpoint. In this sense, we would like to notice that condition $(S)$ imposes our results are rather different from the most in literature, since they are not extensions of results obtained for local Schr\"odinger problems to the nonlocal case. They are purely nonlocal.

Regarding to problem $(P_{\mu})$, there are a considerable number of physical appeals. For instance, in $(P_{\mu})$ if we set $V(x)=0$, and replace $\mu K(x)|u|^{q-2}u+ u^3+h(x)$ and $\mathbb R^4$ by $f(x,u)$ and $\Omega \subset \mathbb R^N$  a bounded domain, respectively, it reduces to the following Dirichlet problem of Kirchhoff type:
\begin{equation*}
    \begin{cases}
           - \left( a+b \displaystyle\int_{\Omega} |\nabla u|^2 \, \text{d}x \right) \Delta u= f(x,u),& \, x \in \Omega, \\
           \phantom{- \left( a+b \displaystyle\int_{\Omega} |\nabla u|^2 \, dx \right) \Delta}u=0,& \, x \in \partial \Omega,        
      \end{cases}
\end{equation*}
which is related to the stationary analogue of the evolution problem
\begin{equation} \label{E:kirchhoff}
\begin{cases}
u_{tt} - \left( a+b \displaystyle\int_{\Omega} |\nabla u|^2 \, \text{d}x \right) \Delta u = f(x,u), &\, (x,t) \in \Omega\times (0,T), \\
\phantom{u_{tt} - \left( a+b \displaystyle\int_{\Omega} |\nabla u|^2 \, \text{d}x \right) \Delta u}u = 0, &\, (x,t) \in \partial\Omega\times (0,T), \\
u(x,0) = u_0(x) \quad \text{and}  \quad u_t(x,0) = u_1(x), &\ x\in \Omega.
\end{cases}
\end{equation}
Such a hyperbolic equation is a general version of the Kirchhoff equation
\begin{equation*} 
   \varrho \frac{\partial^2 u }{\partial t^2}-\left(\frac{P_0}{s}+\frac{E}{2L}\int_0^L \left| \frac{\partial u}{\partial x}\right|^{2} \, \text{d}x \right) \frac{\partial^2 u}{\partial x^2}=0, \quad (x,t) \in(0,L)\times (0,T),
\end{equation*}
which has came to light at Kirchhoff \cite{Kirchhoff}, in 1883, as an extension of the classical well-known D'Alembert wave equation for free vibrations of elastic strings. The Kirchhoff's model takes into account the effects of changes in the length of the string during the vibrations. The parameters in the above equation have the following meanings: $L$ is the length of the string, $s$ is the area of cross-section, $E$ is the Young modulus of the material, $\varrho$ is the mass density and $P_0$ is the initial tension. We recall that nonlocal problems also appear in other fields, for instance, biological processes where the function $u$ describes a distribution which depends on the average of itself (for example, population density), see for instance \cite{AC,ACF} and its references.

Some early research on Kirchhoff equations can be found in the seminal works \cite{Bernstein,Pohozaev}. However, the problem \eqref{E:kirchhoff} received great
attention of a lot of researchers only after Lions \cite{Lions} proposed an abstract framework for it, more precisely, a functional analysis approach was proposed to study it (see \cite{AC,ACF,AS,AP,CCS,PZ,ZP}). Recently, many approaches involving variational and topological methods have been used in a straightforward and effective way in order to get solutions in a lot of works (see  \cite{AlvesMFigueiredo,FS,MZ,Naimen0,Naimen,Nie,NW} and the references therein). The studies of Kirchhoff type equations have also already been extended to the case involving the $p$-Laplacian, for example \cite{CF,D1,LZ} and so on. Sometimes, the nonlocal term appears in generic form $m\left(\displaystyle\int_{\Omega} |\nabla u|^2 \, \text{d}x \right)$, where $m\colon\mathbb{R}_+\rightarrow\mathbb{R}_+$ is a continuous function that must satisfy some appropriate conditions (amongst them, monotonicity or boundedness below by a positive constant), which the typical example is given by the model considered in the original Kirchhoff equation \eqref{E:kirchhoff}. In \cite{ACF,MFigueiredo}, for example, the authors have used comparison between minimax levels of energy to show that the solution of the truncated problem, that is, an auxiliary problem obtained by a truncation on function $m$, is a solution of the original problem.  

Specifically in relation to Kirchhoff-Schr\"odinger type problems such as $(P_{\mu})$, it get so many attention, mainly in unbounded domains, due to the lack of compactness of the Sobolev's embeddings, which makes the study of the problem more delicate, interesting and challenging. In order to overcome this trouble and to recover the compactness of the Sobolev's embeddings, some authors studied their problems in a subspace consisting of radially symmetric functions. This was used in \cite{CL} for example, where Chen and Li  established multiple solutions for nonhomogenous Schrödinger-Kirchhoff problem 
\begin{equation}\label{patos}
- \left( a+b \displaystyle\int_{\mathbb{R}^{N}} |\nabla u|^2 \, \text{d}x \right) \Delta u+V(x)u= f(x,u)+h(x), \quad x\in\mathbb{R}^{N},
\end{equation}
by using Ekeland's variational principle and Mountain-Pass Theorem, with the subcritical nonlinearity $f$ satisfying the Ambrosetti-Rabinowitz condition, i.e. there exists $\theta>4$ such that 
$$
0<\theta F(x,t)=\theta \int_0^{t}f(x,s)\,\textrm{d}s,\, \forall x\in\mathbb R^N,\, t\in\mathbb R\setminus\{0\}.$$ 
For $h=0$, studies still using the subspace of radially symmetric functions can be seen in \cite{Nie,NW}.  A study for nonhomogenous Schrödinger-Kirchhoff problem  \eqref{patos} with the general nonlinearity $F$ satisfying super-quartic condition can be found in Cheng \cite{Cheng}. 

The role played by the nonhomogeneous term $h$ in producing multiple solutions is crucial in our analysis. For this reason, the study of existence of multiple solutions for nonhomogeneous elliptic equations with subcritical and critical growth in bounded and unbounded euclidean domains have received much attention in recent years (see \cite{AT,Rabi,RS,Tarantello}).

Motivated by the above works, the aim of the present paper is to continue the study of the critical nonlocal elliptic equations. To the best of our knowledge, in current literature, there are no results on the problem $(P_{\mu})$ (neither in the nonhomogeneuos nor in the homogeneous case, i.e. $h=0$).  
We emphasize that in four dimensions  $2^{\ast}=4$ is the critical Sobolev exponent of the embeddings $H^1\left(\mathbb R^4\right)\hookrightarrow L^p(\mathbb R^4)$.  This causes a tie between the growth of the nonlocal term and critical nonlinearity and, consequently, as in \cite{Nie}, one of the difficulties is investigating the boundedness of the Palais-Smale sequences, which cannot be  proved directly as usual.  Another difficulty is related to the mountain-pass geometry. We desire that the class of the corresponding energies satisfy that geometry, but we cannot  prove it in the usual way. In this sense, the assumption $(S)$ plays a fundamental role to reach our above goals. Surprisingly, $(S)$ also furnishes a welcome compactness result for our energies.   

We need to introduce some notations. From now on, we write $\displaystyle\int u$ instead of $\displaystyle\int_{\mathbb R^4}u(x) \, \text{d}x$ and we use $C,C_0,C_1,C_2,\ldots$ to denote (possibly different) positive constants. We denote by $B_R(x)\subset\mathbb{R}^{4}$ the open ball centered at $x\in\mathbb{R}^{4}$ with radius $R>0$ and $B^{c}_R(x):=\mathbb{R}^{4}\setminus B_R(x)$. The symbols $o_\varepsilon(1)$ and $o_n(1)$ will denote quantities that converge to zero, as $\varepsilon\to 0$ and $n\to \infty$ respectively. Also, we denote the weak convergence in $X$ by ``$\rightharpoonup$'' and the strong convergence by ``$\to$''.  Besides,  from the assumptions on $V$, the quantity
$$ 
\|u\|^2=\int\left(|\nabla u|^2+V(x)u^2\right),
$$ 
defines a norm over 
$$
E=\left\{u\in D^{1,2}(\mathbb R^4):\displaystyle\int V(x)u^2<\infty\right\}
$$ 
(our work space) such that  $E$ is Hilbert, $E$ is continuously imersed in $D^{1,2}(\mathbb R^4)$ and $L^4(\mathbb R^4)$. Furthermore, under the hypotheses $(K)$ and $(VK)$ in the study of Alves and Souto (see \cite[Proposition 2.1]{AlvesSouto}), we know that $E$ is compactly embedded into the weighted Lebesgue space
$$
L_K^p(\mathbb{R}^4):=\left\{u\colon\mathbb{R}^4\rightarrow\mathbb{R} : u\ \text{is measurable and} \int K(x)|u|^p<\infty\right\},
$$
equipped with the norm
$$
|u|_{p;K}=\left(\int K(x)|u|^p\right)^{\frac{1}{p}},
$$ 
for all $2< p<4$. Also, for $u\in L^p(\mathbb R^4)$ we denote its $p$-norm with respect to the Lebesgue measure by $|u|_p$  and $E^{\ast}$ will designate the dual space of  $E$ with the usual norm $\|\cdot\|_{E^{\ast}}$.
\begin{definition}
We say that $u\colon\mathbb{R}^{4}\rightarrow\mathbb{R}$ is a weak solution of $(P_{\mu})$ if $u \in E$ and it holds the identity
$$
\left(a+b\int|\nabla u|^2\right) \int\nabla u\cdot\nabla \varphi+\int V(x)u\varphi=\mu\int K(x)|u|^{q-2}u\varphi+\int u^3\varphi+\int h\varphi,
$$
for all $\varphi\in E$.
\end{definition}

The main results of this work can be stated as follows.
\begin{theorem}\label{mth}
Assume that $(K),\ (VK)$ and $(S)$ hold. Then, there exists $\mu^*>0 $ sufficiently large such that  $(P_{\mu})$ has a positive energy  weak solution in $D^{1,2}(\mathbb R^4)$ for almost everywhere $\mu > \mu^*$, whenever $0\leq |h|_{\frac{4}{3}}$ is sufficiently small. 
\end{theorem}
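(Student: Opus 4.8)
The plan is to realize the solutions of $(P_\mu)$ as critical points of the $C^1$ functional on $E$
$$
I_\mu(u)=\frac a2\int|\nabla u|^2+\frac12\int V(x)u^2+\frac b4\Big(\int|\nabla u|^2\Big)^2-\frac\mu q\int K(x)|u|^q-\frac14\int u^4-\int hu,
$$
the well-definedness and differentiability of all terms following from $E\hookrightarrow D^{1,2}(\mathbb R^4)\cap L^4(\mathbb R^4)$ and from the compact embedding $E\hookrightarrow L_K^q(\mathbb R^4)$. The whole argument hinges on the pointwise-in-$u$ consequence of $(S)$, namely $S^2\!\int u^4\le\big(\int|\nabla u|^2\big)^2$, which gives
$$
\frac b4\Big(\int|\nabla u|^2\Big)^2-\frac14\int u^4\ \ge\ \frac14\Big(b-\frac1{S^2}\Big)\Big(\int|\nabla u|^2\Big)^2\ \ge\ 0 .
$$
Writing $I_\mu=A-\mu B$ with $B(u)=\frac1q\int K(x)|u|^q\ge0$, this inequality together with $\big|\int hu\big|\le S^{-1/2}|h|_{4/3}\big(\int|\nabla u|^2\big)^{1/2}$ shows that $A(u)\to+\infty$ as $\|u\|\to\infty$, and that $I_\mu(u)\ge\frac12\min\{a,1\}\|u\|^2-C\mu\|u\|^{q}-C|h|_{4/3}\|u\|$. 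Since $q\in(2,4)$, for each admissible $\mu$ there are $\rho,\alpha>0$ (depending on $\mu$) with $I_\mu\ge\alpha$ on $\{\|u\|=\rho\}$, provided $|h|_{4/3}$ is small; and, fixing $w_0\ne0$ with $\int K|w_0|^q>0$, one has $\|w_0\|>\rho$ and $I_\mu(w_0)\le0$ once $\mu$ is large (because $B(w_0)>0$). So $I_\mu$ has the mountain-pass geometry between $0$ and $w_0$, with level $c_\mu\ge\alpha>0$.

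The serious difficulty, which I expect to be the main obstacle, is that because $2^\ast=4$ a Palais-Smale sequence furnished by the Mountain-Pass Theorem cannot be shown bounded by the usual device: computing $I_\mu(u_n)-\frac14I_\mu'(u_n)u_n$ leaves the superquadratic term $\mu(\frac1q-\frac14)\int K|u_n|^q\le C\mu\|u_n\|^{q}$, so no a priori bound on $\|u_n\|$ follows. I would circumvent this by Jeanjean's monotonicity trick applied to the family $I_\mu=A-\mu B$: on any compact interval $[\mu_1,\mu_2]$ of admissible parameters we have $B\ge0$, $A(u)\to+\infty$ as $\|u\|\to\infty$, and the mountain-pass geometry holds for every $\mu\in[\mu_1,\mu_2]$ with the common endpoints $0$ and $w_0$ and with $c_\mu>\max\{I_\mu(0),I_\mu(w_0)\}=0$; Jeanjean's theorem then yields, for almost every $\mu\in[\mu_1,\mu_2]$, a bounded Palais-Smale sequence $(u_n)$ for $I_\mu$ at the level $c_\mu$. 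Exhausting $(\mu^\ast,\infty)$ by such intervals gives the conclusion for almost every $\mu>\mu^\ast$ (the admissible size of $|h|_{4/3}$ shrinking as the interval grows).

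It remains to prove that such a bounded Palais-Smale sequence is relatively compact, and here $(S)$ enters a second time. Up to a subsequence $u_n\rightharpoonup u$ in $E$, $u_n\to u$ in $L_K^q$ and a.e., and $\int|\nabla u_n|^2\to\beta_0\ge\int|\nabla u|^2=:\beta$; passing to the limit in $I_\mu'(u_n)\to0$ (using weak convergence in $D^{1,2}$ and in the $V$-weighted $L^2$, strong convergence in $L_K^q$, and $u_n^3\rightharpoonup u^3$ in $L^{4/3}$) shows that $u$ solves the equation of $(P_\mu)$ with $\beta_0$ in place of $\int|\nabla u|^2$. Setting $v_n:=u_n-u\rightharpoonup0$, the Brezis-Lieb lemma gives $\int u_n^4=\int u^4+\int v_n^4+o_n(1)$ and $\int|\nabla u_n|^2=\beta+\int|\nabla v_n|^2+o_n(1)$; writing $\ell=\lim\int|\nabla v_n|^2$, $\ell_V=\lim\int V v_n^2$, $\ell_4=\lim\int v_n^4$ and subtracting $I_\mu'(u_n)u_n=o_n(1)$ from the limit equation tested with $u$ yields $(a+b\beta_0)\ell+\ell_V=\ell_4$. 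Since $\ell_4\le S^{-2}\ell^2$ and $\ell\le\beta_0$, this forces $(a+b\beta_0)\ell\le S^{-2}\beta_0\ell$, impossible for $\ell>0$ because $a>0$ and $b>S^{-2}$; hence $\ell=0$, then $\ell_V=\ell_4=0$, and $u_n\to u$ strongly in $E$. Therefore $I_\mu'(u)=0$ and $I_\mu(u)=c_\mu>0$; moreover $u\ne0$, since $u=0$ would, by the same $(S)$-argument (now $a\beta_0+(b-S^{-2})\beta_0^2\le0$ forces $\beta_0=0$), give $\|u_n\|\to0$ and hence $I_\mu(u_n)\to0\neq c_\mu$. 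Thus $u\in E\subset D^{1,2}(\mathbb R^4)$ is a positive-energy weak solution of $(P_\mu)$ for almost every $\mu>\mu^\ast$, which is the claim.
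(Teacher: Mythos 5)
Your proposal is correct, and its skeleton coincides with the paper's: same functional $I_\mu$ on $E$, the mountain-pass geometry obtained from $(S)$ via $b\left(\int|\nabla u|^2\right)^2-\int u^4\ge 0$, and Jeanjean's monotonicity trick applied to $I_\mu=A-\mu B$ with $B(u)=\frac1q\int K|u|^q\ge0$ and $A(u)\ge\frac{\min\{a,1\}}{2}\|u\|^2-C|h|_{4/3}\|u\|\to\infty$, yielding bounded $(PS)_{c_\mu}$ sequences for a.e.\ large $\mu$. Where you genuinely diverge is in the compactness step and, more mildly, in the endpoint construction. For the endpoint, the paper dilates a fixed $u\in C_0^\infty$ ($u_t(x)=u(x/t)$) and chooses $t_0$ and then $\mu_0$; you simply fix $w_0$ with $\int K|w_0|^q>0$ and let $\mu$ be large, which is equivalent and a bit more direct (just make sure to record that $\rho$ can be taken smaller than $\|w_0\|$, adjusting the admissible $|h|_{4/3}$ accordingly, exactly as the paper takes $\tau<1<\|w\|$). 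For compactness, the paper proceeds in two stages: Lions' second concentration-compactness lemma with cut-off functions $\phi_{k,\varepsilon}$ to exclude concentration (the inequality $0\ge b\eta_k^2-\nu_k\ge\nu_k(bS^2-1)$), hence strong $L^4$ convergence, and then a separate argument $(I'_\mu(u_n)-I'_\mu(u_0))(u_n-u_0)=o_n(1)$ to upgrade to strong convergence in $E$. You instead pass to the limit equation with coefficient $a+b\beta_0$, use a Brezis--Lieb splitting $v_n=u_n-u$, and derive the single identity $(a+b\beta_0)\ell+\ell_V=\ell_4$, which together with the global Sobolev inequality $\ell_4\le S^{-2}\ell^2$ and $\ell\le\beta_0$ contradicts $a>0$, $b>S^{-2}$ unless $\ell=0$; this yields strong $E$-convergence in one stroke. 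Your route is more elementary (no concentration-compactness machinery, no cut-offs) and exploits $(S)$ at the same point but globally rather than at each concentration point; the paper's route is more robust in situations where the energy does not split so cleanly (e.g.\ sign-changing weights or localized arguments), but for this problem both are complete. The closing remark that $u\ne0$ is redundant, since strong convergence already gives $I_\mu(u)=c_\mu>0=I_\mu(0)$.
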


The proof of Theorem \ref{mth} is based in a result presented in \cite{Jeanjean}. As we said above, mainly in order  to prove the boundedness of some Palais-Smale sequences, which cannot be proved directly in our case.

\begin{theorem}\label{mth1}
Assume that $(K),\ (VK)$ and $(S)$ hold. Then, for each $\mu>0 $, problem $(P_{\mu})$ has a negative energy weak solution in $D^{1,2}(\mathbb R^4)$, whenever $0<|h|_{\frac{4}{3}}$ is sufficiently small.
\end{theorem}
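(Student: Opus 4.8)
The plan is to realize the solution as a \emph{local minimizer} of the $C^1$ energy functional $I_\mu\colon E\to\mathbb R$ associated with $(P_\mu)$,
$$
I_\mu(u)=\frac a2\int|\nabla u|^2+\frac b4\left(\int|\nabla u|^2\right)^2+\frac12\int V(x)u^2-\frac\mu q\int K(x)|u|^q-\frac14\int u^4-\int hu,
$$
produced in a small ball around the origin by Ekeland's variational principle. First I would record the local mountain-pass geometry. Using the continuous embeddings $E\hookrightarrow D^{1,2}(\mathbb R^4)\hookrightarrow L^4(\mathbb R^4)$, the continuous embedding $E\hookrightarrow L_K^q(\mathbb R^4)$ from \cite[Proposition 2.1]{AlvesSouto}, the fact that $h\in L^{4/3}(\mathbb R^4)=(L^4(\mathbb R^4))^*$, and — crucially — assumption $(S)$, which gives
$$
\frac b4\left(\int|\nabla u|^2\right)^2-\frac14\int u^4\ \geq\ \frac14\left(b-\frac1{S^2}\right)\left(\int|\nabla u|^2\right)^2\ \geq\ 0 ,
$$
one gets $I_\mu(u)\geq c_1\|u\|^2-c_2\mu\|u\|^q-c_3|h|_{4/3}\|u\|$ for all $u\in E$, with $q\in(2,4)$. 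Hence there is $\rho=\rho(\mu)>0$ and, for $|h|_{4/3}$ small, a number $\alpha>0$ with $I_\mu\geq\alpha$ on $\partial B_\rho$. On the other hand, since $h\not\equiv0$ one may pick $\phi\in E$ (e.g.\ $\phi\in C_c^\infty(\mathbb R^4)$) with $\int h\phi>0$, and then $I_\mu(t\phi)=-t\int h\phi+O(t^2)<0$ for $t>0$ small, so $c_0:=\inf_{\overline{B_\rho}}I_\mu<0=I_\mu(0)$.

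Next I would apply Ekeland's variational principle on the complete metric space $\overline{B_\rho}$: it produces $(u_n)\subset\overline{B_\rho}$ with $I_\mu(u_n)\to c_0$ and $I_\mu'(u_n)\to0$ in $E^*$. Since $c_0<0<\alpha\leq\inf_{\partial B_\rho}I_\mu$, for large $n$ the $u_n$ lie in the open ball, which justifies the variation argument yielding $\|I_\mu'(u_n)\|_{E^*}\to0$; note that — unlike in Theorem~\ref{mth} — boundedness of this Palais--Smale sequence is automatic, as $(u_n)\subset\overline{B_\rho}$. Passing to a subsequence, $u_n\rightharpoonup u$ in $E$ and in $D^{1,2}(\mathbb R^4)$, $u_n\to u$ in $L_K^q(\mathbb R^4)$ (compact embedding), and $u_n\to u$ a.e.; writing $t_n:=\int|\nabla u_n|^2\to t_0$, a passage to the limit in $I_\mu'(u_n)\varphi=o_n(1)$ for fixed $\varphi\in E$ (the critical term via $u_n^3\rightharpoonup u^3$ in $L^{4/3}$, the $K$-term via the compact embedding, the $V$-term as part of the inner product) shows that $u$ solves the ``frozen'' equation
$$
(a+bt_0)\int\nabla u\cdot\nabla\varphi+\int V(x)u\varphi=\mu\int K(x)|u|^{q-2}u\varphi+\int u^3\varphi+\int h\varphi,\qquad\forall\,\varphi\in E .
$$

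The heart of the proof — and the step I expect to be the main obstacle — is the strong convergence $u_n\to u$ in $E$, and here $(S)$ is decisive. Using the splittings $\int|\nabla u_n|^2=\int|\nabla(u_n-u)|^2+\int|\nabla u|^2+o_n(1)$ (weak convergence in $D^{1,2}$) and $\int u_n^4=\int(u_n-u)^4+\int u^4+o_n(1)$ (Brezis--Lieb), and combining the identity from $I_\mu'(u_n)u_n=o_n(1)$ with the frozen equation tested against $u$ (discarding the terms that vanish by the compact $L_K^q$ convergence and by $\int hu_n\to\int hu$, and keeping $k:=\lim\int V(x)u_n^2-\int V(x)u^2\geq0$ from Fatou), one arrives at
$$
(a+bt_0)\,\ell+k=d,\qquad \ell:=\lim\int|\nabla(u_n-u)|^2\geq0,\quad d:=\lim\int(u_n-u)^4\geq0 .
$$
The Sobolev inequality applied to $u_n-u$ gives $d\leq S^{-2}\ell^2$, hence $(a+bt_0)\ell\leq S^{-2}\ell^2$; if $\ell>0$ then, since $t_0\geq\ell$, this forces $a\leq(S^{-2}-b)t_0\leq0$ because $b>1/S^2$, a contradiction. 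Thus $\ell=0$, whence $d=k=0$, i.e.\ $u_n\to u$ in $D^{1,2}(\mathbb R^4)$, in $L^4(\mathbb R^4)$, and (using $\int V(x)u_nu\to\int V(x)u^2$ from weak convergence in $E$) in $E$.

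Finally, $t_0=\int|\nabla u|^2$, so the frozen equation is exactly the weak formulation of $(P_\mu)$; and $I_\mu(u)=\lim I_\mu(u_n)=c_0<0=I_\mu(0)$ forces $u\not\equiv0$. Since $E\hookrightarrow D^{1,2}(\mathbb R^4)$, $u$ is the desired negative-energy weak solution in $D^{1,2}(\mathbb R^4)$. The usual loss of compactness caused by the critical exponent $2^{*}=4$ (concentration at points or escape of mass at infinity) is precisely what is circumvented by assumption $(S)$, which makes the nonlocal Kirchhoff term dominate the critical nonlinearity; alternatively one can run Lions' concentration--compactness principle and kill every concentration mass by the same $bS^2>1$ mechanism.
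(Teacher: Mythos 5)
Your proposal is correct, and its overall skeleton (small-ball geometry for $|h|_{4/3}$ small, negative infimum on the ball since $h\neq 0$, Ekeland's principle on the closed ball, interior perturbation to get a bounded Palais--Smale sequence at the negative level, then strong $E$-convergence driven by $bS^2>1$) matches the paper's proof, which rests on Lemma \ref{L:geo1}, Proposition \ref{L:IBB}, Proposition \ref{P:EVP} and Proposition \ref{L:new}. Where you genuinely diverge is in the compactness step. The paper proves strong convergence of bounded Palais--Smale sequences via Lions' second concentration--compactness lemma: Proposition \ref{L:convL4} rules out concentration atoms because any atom would give $0\geq(a+b\eta_k)\eta_k-\nu_k\geq\nu_k(bS^2-1)$, yielding $L^4$-convergence, and Proposition \ref{L:new} then upgrades this to convergence in $E$ by testing $I_\mu'(u_n)-I_\mu'(u_0)$ on $u_n-u_0$. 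You instead avoid concentration--compactness altogether: you first pass to the limit in $I_\mu'(u_n)\varphi=o_n(1)$ to obtain the ``frozen'' equation with $t_0=\lim\int|\nabla u_n|^2$, then combine $I_\mu'(u_n)u_n=o_n(1)$, the frozen equation tested on $u$, the Brezis--Lieb splittings and the Sobolev inequality applied to $u_n-u$ to get $(a+bt_0)\ell+k=d\leq S^{-2}\ell^2$, and kill the lost mass $\ell$ by the same $b>1/S^2$ mechanism (using $t_0\geq\ell$ and $a>0$). Your route is more elementary and self-contained for this theorem (no measure-theoretic machinery, boundedness of the sequence is free since it lives in $\overline{B_\rho}$, and you also record explicitly why the frozen equation coincides with the weak formulation once $\ell=0$); the paper's route is heavier but yields a reusable compactness proposition for \emph{arbitrary} bounded Palais--Smale sequences, which it also needs for Theorem \ref{mth}, and it localizes the loss of compactness at possible concentration points rather than treating it globally. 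The only points to state carefully in your write-up are routine: pass to subsequences so that $t_n$, $\ell$, $d$, $k$ all converge, justify $k\geq0$ by weak lower semicontinuity of $u\mapsto\int V(x)u^2$ (or Fatou after extracting a.e.\ convergence), and note that $u_n^3\rightharpoonup u^3$ in $L^{4/3}(\mathbb R^4)$ follows from boundedness plus a.e.\ convergence.
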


The proof of Theorem \ref{mth1} is based on Ekeland's variational principle (see \cite{Ekeland}) to prove the existence of a local minimum type solution. 
\smallskip

Theorems \ref{mth} and \ref{mth1} can be combined to give the following one:

\begin{theorem}
Assume that $(K),\ (VK)$ and $(S)$ hold. Then, there exists $\mu^*>0 $ sufficiently large such that  $(P_{\mu})$ has at least two different  weak solution in $D^{1,2}(\mathbb R^4)$ for almost everywhere $\mu > \mu^*$, whenever $0< |h|_{\frac{4}{3}}$ is sufficiently small. 
\end{theorem}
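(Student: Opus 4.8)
The plan is to obtain the two solutions simply by superposing Theorems~\ref{mth} and~\ref{mth1}, since a positive--energy solution and a negative--energy solution of $(P_\mu)$ cannot coincide. First I would reconcile the hypotheses of the two theorems. Let $\delta_1>0$ be a smallness threshold for $|h|_{\frac{4}{3}}$ furnished by Theorem~\ref{mth} (together with the exceptional set $N\subset(\mu^*,+\infty)$ of measure zero outside of which the positive--energy solution is produced), and let $\delta_2>0$ be the threshold furnished by Theorem~\ref{mth1}. Put $\delta:=\min\{\delta_1,\delta_2\}$. Then for every $h\in L^{\frac{4}{3}}(\mathbb R^4)$ with $0<|h|_{\frac{4}{3}}<\delta$ and every $\mu\in(\mu^*,+\infty)\setminus N$ --- that is, for almost every $\mu>\mu^*$ --- both theorems are applicable at once.

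Next I would extract the solutions. From Theorem~\ref{mth} one gets a weak solution $u_1\in E\subset D^{1,2}(\mathbb R^4)$ of $(P_\mu)$ whose energy is strictly positive, and from Theorem~\ref{mth1} one gets a weak solution $u_2\in E\subset D^{1,2}(\mathbb R^4)$ whose energy is strictly negative, where throughout ``energy'' refers to the same $C^1$ functional $I_\mu$ associated with $(P_\mu)$ (the one whose critical points are precisely the weak solutions in the sense of the Definition above). Since $I_\mu(u_1)>0>I_\mu(u_2)$ forces $u_1\neq u_2$, the problem $(P_\mu)$ possesses at least two distinct weak solutions in $D^{1,2}(\mathbb R^4)$, which is exactly the assertion.

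I do not expect any substantive obstacle in this final statement: the whole analytic content --- the mountain--pass construction for $u_1$ together with the control of the (possibly unbounded) Palais--Smale sequences via the Jeanjean-type argument and hypothesis $(S)$, as well as the Ekeland minimization on a small ball around the origin for $u_2$ --- has already been carried out in the proofs of Theorems~\ref{mth} and~\ref{mth1}. The only points one must verify are that the positive-- and negative--energy conclusions are stated with respect to the \emph{same} functional $I_\mu$, so that the sign comparison $I_\mu(u_1)>0>I_\mu(u_2)$ is meaningful, and that the two parameter regimes are mutually compatible; both are immediate, the latter by the choice $\delta=\min\{\delta_1,\delta_2\}$ and the restriction $\mu>\mu^*$.
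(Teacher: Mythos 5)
Your proposal is correct and is exactly the paper's own argument: the authors obtain this theorem by simply combining Theorems \ref{mth} and \ref{mth1}, distinguishing the two solutions by the opposite signs of their $I_\mu$-energies, just as you do after reconciling the smallness thresholds for $|h|_{\frac{4}{3}}$ and the parameter range $\mu>\mu^*$.
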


The outline of the paper is as follows: Section 2 contains the variational setting in which our problem will be treated and allow us to follow a variational approach. Section 3 is devoted to study convenient properties of some Palais-Smale sequences and of the functional $I_\mu$. The proofs of the main results are established in Section 4. 

\section{Preliminary results}

Following the line firstly introduced by Alves et al. in \cite{ACF} to solve the Kirchhoff
problem, we establish now the necessary functional framework where solutions are naturally studied by variational methods. We begin by noticing that hypotheses $(K)$ and $(VK)$ ensure $E\hookrightarrow L_K^q(\mathbb R^4)$ and, consequently, 
$$
\int K(x)|u|^q<\infty ,~\forall u\in E.
$$
This allows us to consider $I_{\mu}\colon E\to\mathbb R$, where
\begin{multline*}
I_{\mu}(u)=\frac{1}{2}\int\left(a|\nabla u|^2+V(x)u^2\right)\\+\frac{b}{4}\left(\int|\nabla u|^2\right)^2-\frac{\mu}{q}\int K(x)|u|^q-\frac{1}{4}\int u^4-\int hu.
\end{multline*}
Moreover,  it can be showed that $I_{\mu}\in C^1\left(E,\mathbb R\right)$ with derivative given by
\begin{multline*}
I'_{\mu}(u)v=\left(a+b\int|\nabla u|^2\right) \int\nabla u\cdot\nabla v+\int V(x)uv\\
-\mu\int K(x)|u|^{q-2}uv-\int u^3v-\int hv,~\forall u,v\in E.
\end{multline*}
So that, any critical point of the functional $I_{\mu}$ is a weak solution to problem $(P_{\mu})$ and conversely.

\subsection{The mountain-pass geometry}

Next two Lemmas describe the geometric structure of the functional $I_{\mu}$ required by the Mountain-Pass Theorem due to Ambrosetti and Rabinowitz in \cite{ambro-rabi}. Due to the tie in growth for the nonlocal term and critical nonlinearity, we would like to point out that assumption $(S)$ plays a important role for the proof of the second one.

\begin{lemma}\label{L:geo1}
Let $\mu >0$. Then, there exists $\delta_{\mu}>0$ such that for $h\in L^{\frac{4}{3}}(\mathbb R^4)$ with $|h|_{\frac{4}{3}}<\delta_\mu$, it holds that
$$
     I_{\mu}(u)\geq \sigma, \textrm{  for } \|u\|=\tau,
$$
for some $\sigma>0$ and  $0<\tau<1$.
\end{lemma}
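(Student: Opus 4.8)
The plan is to estimate each term in $I_\mu(u)$ from below when $\|u\| = \tau$ for small $\tau$, showing that the quadratic-type terms dominate. First I would drop the nonnegative nonlocal term $\frac{b}{4}\left(\int|\nabla u|^2\right)^2 \geq 0$ and keep the quadratic part: since $a > 0$, we have $\frac{1}{2}\int(a|\nabla u|^2 + V(x)u^2) \geq \frac{\min\{a,1\}}{2}\|u\|^2$ (or simply $\geq \frac{a}{2}\|u\|^2$ if $a \leq 1$, adjusting constants). Next I would control the three remaining terms. For the critical term, the continuous embedding $E \hookrightarrow L^4(\mathbb R^4)$ (noted in the excerpt, with constant related to $S$) gives $\frac{1}{4}\int u^4 \leq C_1 \|u\|^4$. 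For the subcritical weighted term, the embedding $E \hookrightarrow L^q_K(\mathbb R^4)$ gives $\frac{\mu}{q}\int K(x)|u|^q \leq C_2 \mu \|u\|^q$ with $q \in (2,4)$. For the linear term, Hölder's inequality together with $E \hookrightarrow L^4(\mathbb R^4)$ yields $\left|\int hu\right| \leq |h|_{4/3}|u|_4 \leq C_3 |h|_{4/3}\|u\|$.

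Combining these, for $\|u\| = \tau < 1$ we get
\begin{equation*}
I_\mu(u) \geq \frac{a}{2}\tau^2 - C_2\mu\tau^q - C_1\tau^4 - C_3|h|_{4/3}\tau = \tau\left(\frac{a}{2}\tau - C_2\mu\tau^{q-1} - C_1\tau^3 - C_3|h|_{4/3}\right).
\end{equation*}
Since $q - 1 > 1$ and $3 > 1$, the function $g(t) = \frac{a}{2}t - C_2\mu t^{q-1} - C_1 t^3$ is positive for $t$ in a small punctured neighborhood of $0$; fix $\tau \in (0,1)$ small enough that $g(\tau) > 0$, and set $2\sigma_0 := \tau g(\tau) > 0$. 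Then choose $\delta_\mu := \sigma_0/(C_3\tau)$ (which depends on $\mu$ through the choice of $\tau$), so that whenever $|h|_{4/3} < \delta_\mu$ we have $C_3|h|_{4/3}\tau < \sigma_0$, hence $I_\mu(u) \geq 2\sigma_0 - \sigma_0 = \sigma_0 =: \sigma > 0$ on the sphere $\|u\| = \tau$.

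The argument is essentially routine; the only point requiring a little care is that all the embedding constants $C_1, C_2, C_3$ are genuinely independent of $u$ (they come from the fixed embeddings $E \hookrightarrow L^4$ and $E \hookrightarrow L^q_K$ guaranteed earlier), so that the choice of $\tau$ and $\delta_\mu$ can be made uniformly. Note that assumption $(S)$ is not needed here — it enters only in the second geometry lemma — so this proof uses only $(K)$ and $(VK)$ (via the compact, hence continuous, embedding into $L^q_K$) and the standing continuous embedding $E \hookrightarrow L^4(\mathbb R^4)$.
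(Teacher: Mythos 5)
Your proof is correct and follows essentially the same route as the paper: discard the nonnegative nonlocal term, bound the $L^q_K$, $L^4$ and linear terms via the continuous embeddings and H\"older, factor out $\|u\|$, pick $\tau\in(0,1)$ small so the lower-order powers are dominated, and then take $\delta_\mu$ proportional to the surplus. The only cosmetic point is to use $\min\{a,1\}/2$ rather than $a/2$ in the quadratic lower bound (which you already flag), and your observation that $(S)$ is not needed here matches the paper.
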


\begin{proof}
The continuous embeddings $E\hookrightarrow L^q_K(\mathbb R^4)$ and $E\hookrightarrow L^4(\mathbb R^4)$,   yields
\begin{align*}
I_{\mu}(u)&\geq\frac{\min\{a,1\}}{2}\|u\|^{2}-\mu C_0\|u\|^{q}-C_1\|u\|^4-C_2|h|_{\frac{4}{3}}\|u\|\\
&=\|u\|\left(\frac{\min\{a,1\}}{2}\|u\|-\mu C_0\|u\|^{q-1}-C_1\|u\|^3-C_2|h|_{\frac{4}{3}}\right).
\end{align*}
Taking $0<\tau<1$ such that $\mu C_0\tau^{q-1}+C_1\tau^{3}\leq \dfrac{\min\{a,1\}}{4}\tau$, then for $\|u\|=\tau$ we have
$$
I_{\mu}(u)\geq \tau\left(\frac{\min\{a,1\}}{4}\tau-C_2|h|_{\frac{4}{3}}\right).
$$
Thus, for $|h|_{\frac{4}{3}}<\delta_\mu=\dfrac{\min\{a,1\}}{4C_2}\tau$ , we derive 
$$
I_{\mu}(u)\geq \sigma= \tau\left(\frac{\min\{a,1\}}{4}\tau-C_2|h|_{\frac{4}{3}}\right), \ \text{if}\ \|u\|=\tau,
$$
which concludes the proof.
\end{proof}

\begin{remark}
In the above proof, we take $C_2=S^{-1/2}$. Since $0<\tau<1$, for all $\mu>0$ we have 
$$
\delta_\mu<\frac{\min\{a,1\}}{4S^{-1/2}}.
$$
\end{remark}
\medskip

\begin{lemma} \label{L:geo2}
Let $\mu>0$ and assume $|h|_{\frac{4}{3}}<\delta_\mu$. 
Then, there exists $w\in E$ satisfying
$$
      \|w\|>1\ \text{ and }\ I(w)<0,
$$
for all $\mu>0$ sufficiently large. 
\end{lemma}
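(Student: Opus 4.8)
The plan is to exhibit $w$ as a \emph{fixed} (in particular $\mu$-independent) element of $E$ with $\|w\|>1$ for which the single term $-\frac{\mu}{q}\int K(x)|w|^{q}$ drags $I_\mu(w)$ below $0$ once $\mu$ is large. The usual mountain-pass trick of choosing $w=tw_0$ and letting $t\to+\infty$ is \emph{not} available here, and this is exactly the point where $(S)$ intervenes: for every $w_0\in E\setminus\{0\}$ the definition of $S$ gives $\big(\int|\nabla w_0|^{2}\big)^{2}\ge S^{2}\int w_0^{4}$, so that, since $bS^{2}>1$,
$$
\frac{b}{4}\Big(\int|\nabla w_0|^{2}\Big)^{2}-\frac14\int w_0^{4}\ \ge\ \frac{bS^{2}-1}{4}\int w_0^{4}\ \ge\ 0 ,
$$
whence the quartic part of $I_\mu(tw_0)$ is nonnegative and of order $t^{4}$, and $I_\mu(tw_0)\to-\infty$ cannot be achieved by scaling in $t$. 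I would therefore keep the function fixed and exploit instead the linear, sign-definite dependence of $I_\mu$ on $\mu$ coming from the term $-\frac{\mu}{q}\int K(x)|u|^{q}$.

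Concretely, first fix some $w_0\in E$ with $\int K(x)|w_0|^{q}>0$; such a function exists by $(K)$, $(VK)$ and the non-triviality of $E$ (for instance, any non-negative $w_0\in E\setminus\{0\}$ does the job for the model weights $V,K$ of the Introduction). Replacing $w_0$ by $\lambda w_0$ with $\lambda>1/\|w_0\|$ if necessary --- which keeps $w_0$ in $E$ and preserves the strict positivity of $\int K(x)|w_0|^{q}$ --- I may assume $\|w_0\|>1$, and I set $w:=w_0$. Then
$$
I_\mu(w)=\Lambda-\mu\kappa,\qquad \kappa:=\frac1q\int K(x)|w|^{q}>0 ,
$$
where $\Lambda:=\frac12\int\big(a|\nabla w|^{2}+V(x)w^{2}\big)+\frac{b}{4}\big(\int|\nabla w|^{2}\big)^{2}-\frac14\int w^{4}-\int hw$ is a finite real constant independent of $\mu$ (the four integrals are finite because $w\in E\hookrightarrow D^{1,2}(\mathbb R^{4})\hookrightarrow L^{4}(\mathbb R^{4})$ and $h\in L^{4/3}(\mathbb R^{4})$, the last term being controlled by H\"older's inequality). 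Hence $I_\mu(w)\to-\infty$ as $\mu\to+\infty$; in particular $I_\mu(w)<0$ for every $\mu>\max\{0,\Lambda/\kappa\}$. Since this $w$ is the same for all such $\mu$, the lemma follows.

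I do not expect a serious obstacle in this argument; the only point that deserves a line of justification is the existence of $w_0\in E$ with $\int K(x)|w_0|^{q}>0$ --- i.e. that $|\cdot|_{q;K}$ is not identically zero on $E$ --- which is immediate from the standing hypotheses and transparent on the explicit weights in the Introduction. Note that the assumption $bS^{2}>1$ of $(S)$ is used here only to explain why the classical construction breaks down and must be replaced by the large-$\mu$ argument above, and that the hypothesis $|h|_{\frac43}<\delta_\mu$ written in the statement is not actually needed for this lemma, being simply inherited from the mountain-pass framework of Lemma~\ref{L:geo1}.
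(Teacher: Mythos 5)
Your proof is correct, and at its core it runs on the same mechanism as the paper's: keep the test function \emph{fixed} and let the term $-\frac{\mu}{q}\int K(x)|w|^{q}$, whose coefficient is strictly negative, drag $I_\mu(w)$ below zero once $\mu$ is large (both proofs, yours explicitly and the paper's implicitly, need $\int K|w|^{q}>0$ here). The difference is only in the packaging: the paper takes $w=u(\cdot/t_0)$ for a fixed bump $u\in C_0^\infty(\mathbb R^4)$ and a fixed large dilation parameter $t_0$ (so that $\|u_{t_0}\|>1$ and $At_0^2+Bt_0-t_0^4<0$), and then picks $\mu_0$ so that the coefficient of $t_0^4$ drops below $-1$; you instead rescale by a scalar to get $\|w\|>1$ and write $I_\mu(w)=\Lambda-\mu\kappa$ with $\kappa>0$, which is cleaner and proves the lemma as literally stated. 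One caveat: your threshold $\mu^*=\max\{0,\Lambda/\kappa\}$ depends on $h$ through $\int hw$, whereas the paper uses the hypothesis $|h|_{\frac43}<\delta_\mu$ together with the Remark following Lemma \ref{L:geo1} (namely $\delta_\mu<\frac{\min\{a,1\}}{4S^{-1/2}}$ for every $\mu$) to absorb the $h$-term into the $\mu$-independent quantity $B$, so its $\mu_0$ depends only on $u$ and $t_0$ and not on $h$. That uniformity is what the lemma actually feeds into Proposition \ref{L:psb} and the theorems, where $\mu^*$ is fixed before $h$; so the hypothesis you dismiss is not entirely idle. Your argument is repaired in one line: estimate $\left|\int hw\right|\le S^{-1/2}|h|_{\frac43}\|w\|\le\frac{\min\{a,1\}}{4}\|w\|$, which bounds $\Lambda$ by an $h$-independent constant and makes your $\mu^*$ uniform over all admissible $h$.
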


\begin{proof}
 Fix $u\in C^\infty_0( \mathbb R^4)\setminus\{0\}$. For each $t>0$, we set 
$$
   u_t(x)=u\left(\frac{x}{t}\right),\ t\in(0,\infty).
$$
A straightforward computation yields
\begin{multline*}
I_{\mu}(u_t)\leq \frac{a}{2}\left(\int|\nabla u|^2\right)t^2+\left(S^{-1/2}|h|_{\frac{4}{3}}\|u\|\right) t\\+\left(\frac{1}{2}\int V(tx) u^2-\frac{\mu}{q}\int K(tx)|u|^q+\frac{1}{4}\left[b\left(\int|\nabla u|^2\right)^2
-\int u^4\right]\right)t^4.
\end{multline*}
We notice that due to assumption $(S)$, there holds
\begin{equation}\label{E:(S)}
b\left(\int|\nabla u|^2\right)^2-\int u^4>0.
\end{equation}
Let
$$
A=\frac{a}{2}\left(\int|\nabla u|^2\right)~\text{ and }~B=\frac{\min\{a,1\}}{4}\|u\|.
$$
We fix $t_0\approx \infty$ such that
$$
At_0^2+Bt_0-t_0^4<0  \ \text{ and }\ 
\|u_{t_0}\|>1.
$$
From \eqref{E:(S)}, we can choose $\mu_0>0$ such that 
$$
-1=\frac{1}{2}\int V(t_0x)u^2-\frac{\mu_0}{q}\int K(t_0x)|u|^q+\frac{1}{4}\left[b\left(\int|\nabla u|^2\right)^2
-\int u^4\right].
$$
If  $\mu> \mu_0$, then 
$$
  \frac{1}{2}\int V(t_0x) u^2-\frac{\mu}{q}\int K(t_0x)|u|^q+\frac{1}{4}\left[b\left(\int|\nabla u|^2\right)^2
-\int u^4\right]< -1.
$$ 
We get
$$
I_{\mu}(u_{t_0})<At_0^2+Bt_0-t_0^4<0.
$$
So that, if we take $w=u_{t_0}$, then $\|w\|> 1$ and $I_{\mu}(w)<0$,  which ends the proof.
\end{proof}


Next, we summarized the last two Lemmas in the following:

\begin{proposition} \label{P:mountainpass}
Let $\mu>\mu_0$. Then, there exists $\delta_{\mu}>0$ such that  the energy $I_{\mu}$ satisfies the mountain-pass Geometry, whenever $|h|_{\frac{4}{3}}<\delta_\mu$.
\end{proposition}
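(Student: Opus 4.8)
The plan is to assemble the two preceding lemmas into the geometric hypotheses demanded by the Mountain-Pass Theorem of Ambrosetti and Rabinowitz. Recall that $I_{\mu}$ is said to satisfy the mountain-pass geometry when $I_{\mu}(0)=0$, there are constants $\sigma,\tau>0$ with $I_{\mu}(u)\geq\sigma$ whenever $\|u\|=\tau$, and there exists $w\in E$ with $\|w\|>\tau$ and $I_{\mu}(w)<0$. The first of these is immediate, since every term in the expression defining $I_{\mu}$ vanishes at $u=0$.

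Fix $\mu>\mu_0$ and let $\delta_{\mu}>0$ be the threshold produced by Lemma \ref{L:geo1}; attached to it are numbers $\sigma>0$ and $0<\tau<1$ such that $I_{\mu}(u)\geq\sigma$ for every $u$ with $\|u\|=\tau$, which is the second requirement. For the third, suppose $|h|_{\frac{4}{3}}<\delta_{\mu}$ --- exactly the regime in which Lemma \ref{L:geo2} operates --- and, since $\mu>\mu_0$, invoke Lemma \ref{L:geo2} to obtain $w\in E$ with $\|w\|>1$ and $I_{\mu}(w)<0$. Because $\tau<1<\|w\|$ we have in particular $\|w\|>\tau$, so all three conditions are in force, and one may set the mountain-pass level $c_{\mu}:=\inf_{\gamma\in\Gamma_{\mu}}\max_{t\in[0,1]}I_{\mu}(\gamma(t))$, with $\Gamma_{\mu}=\{\gamma\in C([0,1],E):\gamma(0)=0,\ \gamma(1)=w\}$, and record $c_{\mu}\geq\sigma>0$ for subsequent use.

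There is no genuine obstacle here, as the substantive estimates were already carried out in Lemmas \ref{L:geo1} and \ref{L:geo2}; the only points requiring care are that the \emph{same} $\delta_{\mu}$ coming from Lemma \ref{L:geo1} be used, so that the hypotheses of both lemmas hold simultaneously, and that the separating radius $\tau$ is strictly below $1$ while $\|w\|$ is strictly above $1$, which is precisely what places $w$ outside the sphere $\|u\|=\tau$. Note that the compactness condition needed to actually produce a critical point at the level $c_{\mu}$ is not part of the mountain-pass geometry and will be treated separately in the next section.
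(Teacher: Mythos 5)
Your proposal is correct and follows essentially the same route as the paper, which offers no separate argument for this proposition beyond observing that it summarizes Lemmas \ref{L:geo1} and \ref{L:geo2}: you simply combine the radius $\tau<1$, the level $\sigma>0$ and the threshold $\delta_\mu$ from Lemma \ref{L:geo1} with the point $w$, $\|w\|>1$, $I_\mu(w)<0$ from Lemma \ref{L:geo2}, together with $I_\mu(0)=0$. Your added remarks (using the same $\delta_\mu$ so both lemmas apply simultaneously, and $\tau<1<\|w\|$ so $w$ lies outside the sphere) are exactly the consistency checks implicit in the paper's statement.
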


\begin{remark}\label{rmk-mu}  Hereafter, for each $\mu>\mu_0$,  we will take the corresponding mountain-pass levels 
$$
      c_{\mu}=\inf_{\gamma\in\Gamma}\max_{t\in[0,1]} I_{\mu}(\gamma(t)) 
$$ 
over the same class of paths $\Gamma$, that is,
$$
      \Gamma=\left\{ \gamma \in C\left([0,1],E\right):\gamma(0)=0,\ \gamma(1)=w \right\}.
 $$
\end{remark}

\subsection{A local minimum of $I_\mu$ near the origin}

In what follows,  
$$
B_\tau=\left\{u\in E: \|u\|\leq \tau\right\},
$$
where $\tau>0$ is given by Lemma \ref{L:geo1}. Evidently, $B_\tau$ is a complete metric subspace of $E$.

\begin{proposition} \label{L:IBB}
The functional $I_{\mu}$ is bounded below in $B_\tau$. Moreover, if $h\neq  0$ and 
$$
\nu_{\mu}=\displaystyle\inf_{B_\tau} I_{\mu},
$$ 
then $\nu_{\mu}<0$.
\end{proposition}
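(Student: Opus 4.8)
The plan is to use the nonnegativity of the two leading pieces of $I_\mu$. On $B_\tau$ one has
$$
\frac12\int\big(a|\nabla u|^2+V(x)u^2\big)\ge\frac{\min\{a,1\}}{2}\|u\|^2\ge0
\qquad\text{and}\qquad
\frac{b}{4}\Big(\int|\nabla u|^2\Big)^2\ge0,
$$
so that only the three remaining terms can be negative, and I would bound each of them by a constant depending on $\tau$ alone. By the continuous embedding $E\hookrightarrow L^q_K(\mathbb R^4)$, $\frac{\mu}{q}\int K(x)|u|^q\le\mu C_0\|u\|^q\le\mu C_0\tau^q$; by $E\hookrightarrow L^4(\mathbb R^4)$, $\frac14\int u^4\le\frac14 S^{-2}\|u\|^4\le\frac14 S^{-2}\tau^4$; and by H\"older's inequality together with $|u|_4\le S^{-1/2}\|u\|$, $\big|\int hu\big|\le S^{-1/2}|h|_{\frac{4}{3}}\|u\|\le S^{-1/2}|h|_{\frac{4}{3}}\tau$. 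This yields $I_\mu(u)\ge-\mu C_0\tau^q-\frac14 S^{-2}\tau^4-S^{-1/2}|h|_{\frac{4}{3}}\tau$ for every $u\in B_\tau$, which is the desired finite lower bound.

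\textbf{Negativity of $\nu_\mu$.} Here the strategy is to exhibit a single point of $B_\tau$ at which $I_\mu$ is negative. Since $h\neq0$, I may pick $\varphi\in E$ with $\int h\varphi>0$ (the functional $v\mapsto\int hv$ is continuous on $E$ because $h\in L^{\frac{4}{3}}(\mathbb R^4)$ and $E\hookrightarrow L^4(\mathbb R^4)$, and it is not the zero functional). Then, for $t\in(0,1)$,
\begin{multline*}
I_\mu(t\varphi)=-t\int h\varphi+\frac{t^2}{2}\|\varphi\|^2+\frac{bt^4}{4}\Big(\int|\nabla\varphi|^2\Big)^2-\frac{\mu t^q}{q}\int K(x)|\varphi|^q-\frac{t^4}{4}\int\varphi^4\\
\le -t\int h\varphi+Ct^2,
\end{multline*}
where $C=\frac12\|\varphi\|^2+\frac b4\big(\int|\nabla\varphi|^2\big)^2$, since the last two integral terms are $\le0$ and the two positive contributions are $\le Ct^2$ for $t<1$. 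Choosing $t$ with $0<t<\min\{1,\,(\int h\varphi)/C,\,\tau/\|\varphi\|\}$ makes $I_\mu(t\varphi)<0$ and simultaneously guarantees $t\varphi\in B_\tau$; hence $\nu_\mu=\inf_{B_\tau}I_\mu\le I_\mu(t\varphi)<0$.

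\textbf{Main obstacle.} I do not anticipate a genuine difficulty: the first assertion is an immediate consequence of the embeddings $E\hookrightarrow L^q_K(\mathbb R^4)$ and $E\hookrightarrow L^4(\mathbb R^4)$ recorded in Section~2, and the second is the standard observation that the linear term $-t\int h\varphi$ dominates the energy of the ray $t\varphi$ near $t=0$. The only points deserving care are the use of $q>2$ (so that, for $t<1$, the $t^q$ term is absorbed into $O(t^2)$ and does not compete with the linear term) and the extra constraint $t<\tau/\|\varphi\|$ keeping the test function inside $B_\tau$; it is also worth stressing that the hypothesis $h\neq0$ is precisely what furnishes a direction $\varphi$ with $\int h\varphi>0$, without which $\nu_\mu=0$ would be attained at the origin.
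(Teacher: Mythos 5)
Your argument is correct and follows essentially the same route as the paper: boundedness below on $B_\tau$ via the embeddings $E\hookrightarrow L^q_K(\mathbb R^4)$ and $E\hookrightarrow L^4(\mathbb R^4)$ (the paper bounds $|I_\mu|$ on $B_\tau$, you bound only the negative terms, which is the same estimate), and negativity of $\nu_\mu$ by evaluating $I_\mu$ along a ray $t\varphi$ with $\int h\varphi>0$ and letting the linear term dominate for small $t$ (your explicit constraint $t<\tau/\|\varphi\|$ keeping $t\varphi\in B_\tau$ is a welcome detail the paper only states implicitly). The sole cosmetic slip is writing the quadratic term as $\frac{t^2}{2}\|\varphi\|^2$ with an equality sign, which holds only if $a=1$; replacing it by the upper bound $\frac{\max\{a,1\}}{2}t^2\|\varphi\|^2$ (and adjusting $C$ accordingly) fixes this without changing the argument.
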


\begin{proof}
Due to the continuous $E\hookrightarrow L^q_K(\mathbb R^4)$ and $E\hookrightarrow L^4(\mathbb R^4)$, we have
$$
\left|I_{\mu}(u)\right|\leq \max\left\{a,1\right\}\|u\|^2 +\frac{b}{4}\|u\|^4+\mu C_0\|u\|^q + C_1\|u\|^4+C_2|h|_{\frac{4}{3}}\|u\|.
$$
Then, for $\|u\|\leq \tau$,
$$
\left|I_{\mu}(u)\right|\leq \max\left\{a,1\right\}\tau^2+\frac{b}{4}\tau^4+\mu C_0\tau^q+C_1\tau^4+C_2|h|_{\frac{4}{3}}\tau:=C,
$$
which shows
$$
I_{\mu}(u)\geq -C,~\forall u\in B_\tau,
$$
as desired. Now, let  $$\nu_{\mu}=\displaystyle\inf_{B_\tau} I_{\mu},
$$ and fix $u\in E\setminus\{0\}$ such that  $\displaystyle\int hu>0$. Given $t>0$, there holds
\begin{align*}
I_{\mu}(tu) &=\frac{1}{2}\left(\int \left(a|\nabla u|^2+V(x)u^2\right)\right)t^2\\
&+\frac{1}{4}\left[b\left(\int|\nabla u|^2\right)^{2}-\int u^4\right]t^4
-\frac{\mu}{q}\left(\int K(x)|u|^q\right) t^q-\left(\int hu\right)t,
\end{align*}
showing $I_{\mu}(tu)<0$ for sufficiently small values of $t$. Since $tu\in B_\tau$ for these values, it is also true that
$$
\nu_{\mu}\leq I_{\mu}(tu),
$$
and we have achieved the proof.
\end{proof}

\begin{remark}
It will be proved in Section 4 that $\nu_\mu$ is in fact a local minimum value to $I_\mu$.
\end{remark}

\section{On Palais-Smale sequences}

Firstly we recall that $(u_n)$ in $E$ is a Palais-Smale sequence at level $d\in\mathbb{R}$ (briefly $(PS)_d$) for the functional $I_{\mu}$  if 
$$
I_{\mu}(u_n)\rightarrow d\ \text{in}\ \mathbb R\ \ \text{and}\ \ I'_{\mu}(u_n)\rightarrow 0\ \text{in}\ E^\ast\ \text{as}\ n\to+\infty.
$$

\subsection{The boundedness of some $(PS)$ sequences}

In order to prove the boundedness of Palais-Smale sequences at the mountain-pass level for $I_{\mu}$, we will use the following result due to Jeanjean \cite{Jeanjean}. This is a part really necessary in our arguments, since we cannot  prove the boundedness directly as usual.

\begin{lemma} [Jeanjean, \cite{Jeanjean}] \label{P:Jeanjean}
  Let $(X,\|\cdot\|)$ be a Banach space, $J\subset \mathbb R_+$ an interval and $(\varphi_\mu)$ be a family of $C^1$ functionals on $X$ of the form 
  $$
      \varphi_\mu(u)=A(u)-\mu B(u),\ \mu \in J,
  $$    
where $B(u)\geq 0, \, \forall u \in X$, and such that 
  $$
       A(u)\to\infty\ \text{ or }\ B(u)\to\infty,\ \text{as } \|u\|\to \infty.
  $$     
  If there exist two
points $v_1,v_2 \in X$ such that setting 
  $$
     \Gamma=\left\{ \gamma \in C([0,1],X):\gamma(0)=v_1,\ \gamma(1)=v_2 \right\},
  $$
 for all $\mu \in J$ there hold
  $$
     \beta_\mu:=\inf_{\gamma \in \Gamma} \max_{t\in [0,1]} \varphi_\mu(\gamma(t))> \max \left\{ \varphi_\mu(v_1), \varphi_\mu(v_2)\right\},
  $$
  then, for almost every $\mu \in J$, there is a bounded $(PS)_{\beta_\mu}$ sequence $(u_n)$ for $\varphi_\mu$ in $X$. 
\end{lemma}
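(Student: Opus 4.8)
The plan is to follow Jeanjean's monotonicity trick, whose engine is that the min-max level depends monotonically on the parameter. First I would show that $\mu\mapsto\beta_\mu$ is non-increasing on $J$: since $B(u)\ge 0$, for $\mu_1<\mu_2$ in $J$ one has $\varphi_{\mu_2}(u)\le\varphi_{\mu_1}(u)$ for every $u\in X$, and applying $\max_{t}$ along an arbitrary $\gamma\in\Gamma$ and then $\inf_{\gamma\in\Gamma}$ preserves the inequality, giving $\beta_{\mu_2}\le\beta_{\mu_1}$. A monotone real function is differentiable almost everywhere by Lebesgue's theorem, so $\beta'_\mu$ exists for a.e.\ $\mu\in J$, and I fix such a point $\mu$ once and for all; the remainder of the argument then produces a bounded $(PS)_{\beta_\mu}$ sequence at this particular $\mu$.

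Next I would extract bounded almost-maximizers. Choose $\mu_n\nearrow\mu$ and, for each $n$, a path $\gamma_n\in\Gamma$ with $\max_{t}\varphi_{\mu_n}(\gamma_n(t))\le\beta_{\mu_n}+(\mu-\mu_n)$; let $u_n:=\gamma_n(t_n)$ be a point where $\varphi_\mu(\gamma_n(\cdot))$ attains its maximum along $\gamma_n$, so that $\varphi_\mu(u_n)\ge\beta_\mu$ since $\gamma_n\in\Gamma$. Because $B\ge 0$ gives $\varphi_\mu\le\varphi_{\mu_n}$ pointwise, and because differentiability at $\mu$ yields $\beta_{\mu_n}-\beta_\mu\le C_0(\mu-\mu_n)$ for $n$ large with $C_0:=1-\beta'_\mu$, the identity $\varphi_{\mu_n}(u_n)-\varphi_\mu(u_n)=(\mu-\mu_n)B(u_n)$ forces $B(u_n)\le C_0+1$. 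Hence $\varphi_{\mu_n}(u_n)$ is bounded, whence $A(u_n)=\varphi_{\mu_n}(u_n)+\mu_n B(u_n)$ is bounded as well. The coercivity alternative $A(u)\to\infty$ or $B(u)\to\infty$ as $\|u\|\to\infty$ then rules out $\|u_n\|\to\infty$, so $(u_n)$ is bounded; moreover $\beta_\mu\le\varphi_\mu(u_n)\le\varphi_{\mu_n}(u_n)\le\beta_{\mu_n}+(\mu-\mu_n)\to\beta_\mu$ gives $\varphi_\mu(u_n)\to\beta_\mu$.

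It remains to upgrade these bounded almost-maximizers to a Palais-Smale sequence, i.e.\ to arrange $\varphi'_\mu(u_n)\to 0$; this is the crux. I would argue by contradiction: if no bounded $(PS)_{\beta_\mu}$ sequence existed, then on any fixed closed ball $\overline{B}_R$ there would be no Palais-Smale sequence at level $\beta_\mu$, which, by the contrapositive of the definition (testing with $\delta=\alpha=1/k$), produces $\alpha,\delta>0$ such that $\|\varphi'_\mu(u)\|_{X^*}\ge\alpha$ whenever $\|u\|\le R$ and $|\varphi_\mu(u)-\beta_\mu|\le\delta$. Choosing $R$ larger than the uniform bound on $(u_n)$ from the previous step, I would invoke the quantitative deformation lemma to build a Lipschitz deformation that strictly lowers $\varphi_\mu$ across the strip $\{|\varphi_\mu-\beta_\mu|\le\delta\}$ inside $\overline{B}_R$ while fixing everything outside a slightly larger strip; applying it to the near-optimal paths $\gamma_n$ should push $\max_t\varphi_\mu(\gamma_n(t))$ below $\beta_\mu-\delta/2$, contradicting $\beta_\mu=\inf_{\gamma\in\Gamma}\max_t\varphi_\mu(\gamma(t))$.

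The main obstacle is exactly this last deformation step in an unbounded, infinite-dimensional space: the gradient lower bound is only available on the fixed ball $\overline{B}_R$, so the flow must be confined there and must not raise $\varphi_\mu$ elsewhere along the path. This is where the boundedness obtained in the second step is indispensable, since it guarantees that the maximum of $\varphi_\mu$ along each near-optimal path is attained inside $\overline{B}_R$, so that confining the deformation to $\overline{B}_R$ still suffices to depress the global maximum of the path. Controlling the portions of $\gamma_n$ that leave $\overline{B}_R$, where no gradient estimate holds, by a cutoff, and verifying that the deformed path stays admissible in $\Gamma$, are the delicate technical points; everything else is the soft monotonicity-plus-measure-theory skeleton described above.
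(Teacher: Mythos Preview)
The paper does not prove this lemma; it is quoted verbatim from Jeanjean \cite{Jeanjean} and used as a black box in Proposition~\ref{L:psb}, so there is no ``paper's own proof'' to compare against. Your outline is a faithful sketch of Jeanjean's original monotonicity-trick argument (monotonicity of $\mu\mapsto\beta_\mu$, a.e.\ differentiability, bounded almost-optimal paths at points of differentiability, then a localized deformation).

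One point worth tightening: in the deformation step you obtain boundedness only for the single maximizer $u_n$ of $\varphi_\mu$ along $\gamma_n$, but the localized deformation must act on the entire near-top set $\{t:\varphi_\mu(\gamma_n(t))\ge\beta_\mu-\varepsilon\}$. The same difference-quotient estimate you used for $u_n$ in fact applies to every such $\gamma_n(t)$: if $\varphi_\mu(\gamma_n(t))\ge\beta_\mu-(\mu-\mu_n)$ then $(\mu-\mu_n)B(\gamma_n(t))=\varphi_{\mu_n}(\gamma_n(t))-\varphi_\mu(\gamma_n(t))\le\beta_{\mu_n}+(\mu-\mu_n)-\beta_\mu+(\mu-\mu_n)$, so $B(\gamma_n(t))$ and then $A(\gamma_n(t))$ are uniformly bounded, placing the whole near-top portion of the path inside a fixed ball $\overline{B}_R$. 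With that refinement the cutoff deformation genuinely lowers the max of $\varphi_\mu$ along $\gamma_n$ without affecting the endpoints, yielding the contradiction; this is exactly how Jeanjean closes the argument.
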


The application of Lemma \ref{P:Jeanjean} to functional $I_{\mu}$ yields bounded  Palais-Smale sequences at mountain-pass level $c_\mu$ for large values of $\mu$. Here, once more, we should underline the role played by assumption $(S)$.

\begin{proposition}\label{L:psb}
Let $\mu^*=\mu_0>0$  given in Lemma \ref{L:geo2} and $\mu>\mu^*$. If  $|h|_{\frac{4}{3}}<\delta_{\mu}$ (see  Lemma \ref{L:geo1}),  there exists (except for a zero measure set of $\mu's$) a bounded Palais-Smale sequence for $I_{\mu}$ at level 

$$
    c_{\mu}:=\inf_{\gamma \in \Gamma} \max_{t\in [0,1]} I_{\mu}(\gamma(t)),
$$
where
$$
     \Gamma=\left\{ \gamma \in C\left([0,1],E\right):\gamma(0)=0,\ \gamma(1)=w \right\},
$$
with $w$  given by Lemma \ref{L:geo2}.
\end{proposition}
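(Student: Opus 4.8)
The plan is to deduce the statement from Jeanjean's abstract result, Lemma~\ref{P:Jeanjean}, applied to the family \((I_\mu)\) on the Banach space \(X=E\). First I would put \(I_\mu\) into the required form \(I_\mu(u)=A(u)-\mu B(u)\) by taking
$$
A(u)=\frac12\int\left(a|\nabla u|^2+V(x)u^2\right)+\frac b4\left(\int|\nabla u|^2\right)^2-\frac14\int u^4-\int hu,
\qquad
B(u)=\frac1q\int K(x)|u|^q .
$$
Since \(K\ge 0\), \(B(u)\ge 0\) for every \(u\in E\); moreover \(B\in C^1(E,\mathbb R)\) because \(E\hookrightarrow L^q_K(\mathbb R^4)\), so that \(A=I_\mu+\mu B\in C^1(E,\mathbb R)\) as well. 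The one nontrivial structural hypothesis to check is the alternative ``\(A(u)\to\infty\) or \(B(u)\to\infty\) as \(\|u\|\to\infty\)'', and I would establish the first option — this is precisely where assumption \((S)\) enters. By the definition of \(S\),
$$
\int u^4\le S^{-2}\left(\int|\nabla u|^2\right)^2,\qquad\text{hence}\qquad
\frac b4\left(\int|\nabla u|^2\right)^2-\frac14\int u^4\ \ge\ \frac{b-S^{-2}}{4}\left(\int|\nabla u|^2\right)^2\ \ge\ 0
$$
by \((S)\). Combining this with \(\int(a|\nabla u|^2+V(x)u^2)\ge\min\{a,1\}\|u\|^2\) and \(\big|\int hu\big|\le S^{-1/2}|h|_{4/3}\|u\|\) gives the uniform lower bound \(A(u)\ge\frac{\min\{a,1\}}{2}\|u\|^2-S^{-1/2}|h|_{4/3}\|u\|\), which tends to \(+\infty\) as \(\|u\|\to\infty\).

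Next I would verify the minimax gap. Take \(v_1=0\) and \(v_2=w\), where \(w\) is the point produced by Lemma~\ref{L:geo2} (it does not depend on \(\mu\)), and let \(\Gamma\) be the class of paths in the statement. Then \(I_\mu(v_1)=I_\mu(0)=0\), and by Lemma~\ref{L:geo2} \(I_\mu(v_2)=I_\mu(w)<0\), so \(\max\{I_\mu(v_1),I_\mu(v_2)\}=0\). On the other hand, since \(0<\tau<1<\|w\|\) (with \(\tau\) from Lemma~\ref{L:geo1}), every \(\gamma\in\Gamma\) crosses the sphere \(\{\|u\|=\tau\}\) by continuity, and Lemma~\ref{L:geo1} then yields \(\max_{t\in[0,1]}I_\mu(\gamma(t))\ge\sigma\); taking the infimum over \(\gamma\in\Gamma\) gives \(\beta_\mu:=c_\mu\ge\sigma>0=\max\{I_\mu(v_1),I_\mu(v_2)\}\). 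Thus all the hypotheses of Lemma~\ref{P:Jeanjean} hold.

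Finally, I would fix an arbitrary \(\Lambda>\mu^*\) and apply Lemma~\ref{P:Jeanjean} on the interval \(J=[\mu^*,\Lambda]\subset\mathbb R_+\). Since the threshold \(\delta_\mu\) of Lemma~\ref{L:geo1} can be taken non-increasing in \(\mu\) (see the choice of \(\tau\) there), the requirement \(|h|_{4/3}<\delta_\Lambda\) ensures \(|h|_{4/3}<\delta_\mu\) for every \(\mu\in J\), so the two verifications above hold simultaneously on all of \(J\); Lemma~\ref{P:Jeanjean} then produces, for almost every \(\mu\in J\), a bounded \((PS)_{\beta_\mu}=(PS)_{c_\mu}\) sequence for \(I_\mu\) in \(E\). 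As \(\Lambda>\mu^*\) was arbitrary and a countable union of null sets is null, this yields the stated conclusion for almost every \(\mu>\mu^*\). The only genuinely delicate point is the coercivity of \(A\): were \((S)\) to fail, the quartic difference \(\frac b4(\int|\nabla u|^2)^2-\frac14\int u^4\) could tend to \(-\infty\) and swamp the quadratic part, so that neither \(A(u)\to\infty\) nor \(B(u)\to\infty\) would hold and Jeanjean's lemma could not be invoked; everything else is a routine matching of the hypotheses of Lemma~\ref{P:Jeanjean}.
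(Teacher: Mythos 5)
Your proposal is correct and follows essentially the same route as the paper: the identical splitting $I_\mu(u)=A(u)-\mu B(u)$ with $A$ containing the quartic difference, coercivity of $A$ via assumption $(S)$ and the Sobolev constant, the minimax gap from Lemmas \ref{L:geo1} and \ref{L:geo2}, and then Jeanjean's Lemma \ref{P:Jeanjean}. You are in fact somewhat more careful than the paper, which asserts $c_\mu>\max\{I_\mu(0),I_\mu(w)\}$ without the sphere-crossing argument and does not address the choice of the interval $J$ or the $\mu$-dependence of $\delta_\mu$; your exhaustion by intervals $[\mu^*,\Lambda]$ handles that cleanly.
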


\begin{proof} 
Setting
$$
   A(u):=\frac{1}{2}\int\left(a|\nabla u|^2+ V(x)u^2\right)+\frac{1}{4}\left[b\left(\int|\nabla u|^2\right)^2-\int u^4\right]-\int hu
$$
and
$$
   B(u):=\frac{1}{q}\int K(x)|u|^q,
$$
we can consider
$$
  I_{\mu}(u)=A(u)-\mu B(u),~ \mu> \mu^*.
$$
Due to \eqref{E:(S)}, we derive
$$
A(u)\geq \frac{\min\{a,1\}}{2}\|u\|^2-C_2|h|_{\frac{4}{3}}\|u\|.
$$
Thus 
$$
A(u)\to  \infty,\ \text{as } \|u\| \to \infty.
$$
Since
$$
    c_{\mu}> \max \left\{ I_{\mu}(0), I_{\mu}(w)\right\}, ~ \mu> \mu^*,
$$
the proof follows from Lemma \ref{P:Jeanjean}.
\end{proof}

\subsection{A compactness result for $I_\mu$}

Next two Propositions provide a compactness type result for functional $I_\mu$. This fact is reached combining Lions' Second Concentration Compactness Lemma (cf. \cite[Lemma 2.1]{pLions}) and a Hardy-type inequality in the study of Alves and Souto (cf \cite[Proposition 2.1]{AlvesSouto}).

\begin{proposition}\label{L:cl4} \label{L:convL4}
Let $(u_n)$ be a bounded $(PS)_d$ sequence for $I_\mu$. Then, there exist $u_0\in L^4(\mathbb R^4)$ and a subsequence of $(u_n)$, also denoted by $(u_n)$, such that 
 $$
 u_n\to u_0\text{ in }L^4(\mathbb R^4), \text{ as }n\to \infty.
 $$
 \end{proposition}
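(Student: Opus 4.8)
The plan is to exploit the nonlocal term together with assumption $(S)$ to convert weak convergence in $D^{1,2}(\mathbb R^4)$ into strong convergence in $L^4(\mathbb R^4)$, via Lions' Second Concentration Compactness Lemma. Since $(u_n)$ is bounded in $E\hookrightarrow D^{1,2}(\mathbb R^4)$, passing to a subsequence we may assume $u_n\rightharpoonup u_0$ in $E$ and in $D^{1,2}(\mathbb R^4)$, $u_n\to u_0$ a.e., and $u_n\to u_0$ strongly in $L^q_K(\mathbb R^4)$ by the compact embedding of Alves and Souto. By Lions' lemma applied to the bounded sequence $(u_n-u_0)$ (or directly to $(u_n)$), there are an at most countable index set $\mathcal J$, points $x_j\in\mathbb R^4$, and nonnegative weights $\mu_j,\nu_j$ with
$$
|\nabla u_n|^2\rightharpoonup \mathrm{d}\mu \geq |\nabla u_0|^2+\sum_{j\in\mathcal J}\mu_j\delta_{x_j},\qquad
u_n^4\rightharpoonup \mathrm{d}\nu = u_0^4+\sum_{j\in\mathcal J}\nu_j\delta_{x_j},
$$
in the sense of measures, and $S\,\nu_j^{1/2}\leq \mu_j$ for each $j$. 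The goal reduces to showing $\mathcal J=\emptyset$ and $\int u_n^4\to\int u_0^4$, which together with $u_n\to u_0$ a.e.\ gives $u_n\to u_0$ in $L^4$ by the Brezis-Lieb lemma.

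The key step is ruling out concentration at each $x_j$. Here is where $(S)$ enters decisively. Fix $j$ and a cutoff $\psi_\varepsilon$ supported near $x_j$ with $\psi_\varepsilon(x_j)=1$. Testing $I'_\mu(u_n)$ against $\psi_\varepsilon u_n$ and using $I'_\mu(u_n)\to 0$, boundedness of $(u_n)$, the strong $L^q_K$-convergence, and $h\in L^{4/3}$, the subcritical and linear terms localized by $\psi_\varepsilon$ vanish as $n\to\infty$ then $\varepsilon\to 0$; the cross term $\int u_n\nabla u_n\cdot\nabla\psi_\varepsilon$ also vanishes in that iterated limit by Cauchy-Schwarz and the $L^4$-bound. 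What survives is
$$
\Big(a+b\,\mathrm{d}\mu(\mathbb R^4)\Big)\mu_j \;=\; \nu_j \qquad\text{in the limit,}
$$
or more carefully, $\big(a+b\,\lim_n\int|\nabla u_n|^2\big)\mu_j=\nu_j$; in particular $b\,\mu_j^2\leq b\,\big(\lim_n\int|\nabla u_n|^2\big)\mu_j\leq \nu_j$ since $\mu_j\leq \lim_n\int|\nabla u_n|^2$. Combining with $S\,\nu_j^{1/2}\leq\mu_j$, i.e.\ $\nu_j\leq S^{-2}\mu_j^2$, we get $b\,\mu_j^2\leq S^{-2}\mu_j^2$, forcing $\mu_j=0$ (hence $\nu_j=0$) because $b>1/S^2$ by $(S)$. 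Thus $\mathcal J=\emptyset$, and one argues similarly (with a cutoff near infinity, using hypothesis $(K)$ to control $\int K|u_n|^q$ outside a large ball) that no mass escapes to infinity in the $L^4$-sense; therefore $\int u_n^4\to\int u_0^4$.

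With $\mathrm{d}\nu = u_0^4$ and $\int u_n^4\to\int u_0^4$, the Brezis-Lieb lemma (or uniform integrability plus a.e.\ convergence) gives $\|u_n-u_0\|_4\to 0$, which is the assertion; it remains to note $u_0\in L^4(\mathbb R^4)$ simply because $u_0\in D^{1,2}(\mathbb R^4)\hookrightarrow L^4(\mathbb R^4)$. The main obstacle is the concentration analysis at the $x_j$: one must handle the nonlocal coefficient $a+b\int|\nabla u_n|^2$ carefully, keeping track of the fact that its limit dominates $\mu_j$, so that the quadratic-in-$\mu_j$ inequality closes precisely under $(S)$; a secondary technical point is the vanishing of the gradient cross term $\int u_n\nabla u_n\cdot\nabla\psi_\varepsilon$, which needs the iterated limit $n\to\infty$ then $\varepsilon\to 0$ and the $L^4$-boundedness of $(u_n)$.
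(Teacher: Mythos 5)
Your proposal is correct and follows essentially the same route as the paper: Lions' Second Concentration Compactness Lemma, testing $I'_\mu(u_n)$ against a localized $\phi u_n$, and using $\eta_k\geq S\nu_k^{1/2}$ together with $(S)$ (via $b\eta_k^2\leq\nu_k\leq S^{-2}\eta_k^2$) to rule out atoms, then Brezis--Lieb. The only difference is that you explicitly flag (and sketch how to exclude) possible loss of $L^4$-mass at infinity, a step the paper's proof passes over silently, so your version is if anything slightly more complete.
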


\begin{proof}
Since $(u_n)$ is bounded in $E$ and $E$ is reflexive, there exist $u_0\in E$ and a subsequence of $(u_n)$, which we also denote by $(u_n)$, such that 
$$
u_n\rightharpoonup u_0~\text{in}~E,~\text{as}~n\to \infty.
$$
Then,  
$$
u_n(x)\to u_0(x),~\text{for a.e.}~ x\in \mathbb R^4,~\text{as}~n\to\infty,
$$
and, since $E\hookrightarrow L^4(\mathbb R^4)$, as well $(u_n)$ is bounded in $L^4(\mathbb R^4)$. 
Thus, by Brezis-Lieb Lemma (cf. \cite[Lemma 1.32]{Willem}), it is sufficient to show that 
$$
|u_n|_4\to |u_0|_4,~\text{as}~n\to \infty.
$$
By using  Lions' Second Concentration Compactness Lemma, there exist at most a countable set $\mathcal{I}$,  $\{x_{k}\}_{k \in \mathcal{I}} \subset \mathbb R^4$ and $ \{\eta_{k}\}_{k \in \mathcal{I}},\ \{\nu_{k}\}_{k \in \mathcal{I}} \subset (0,\infty)$ such that
$$ |\nabla u_n|^2 \, dx \rightharpoonup \eta \ge |\nabla u_0|^2 \: dx + \displaystyle \sum_{k \in \mathcal{I}} \eta_{k}\delta_{x_{k}},$$
$$ u_n^4\,dx \rightharpoonup\nu = u_0^4 \: dx + \displaystyle \sum_{k \in \mathcal{I}} \nu_{k}\delta_{x_{k}},$$
\begin{equation} \label{E:LCC}
\eta_{k} \ge S\nu_{k}^{\frac{1}{2}} \quad (k \in \mathcal{I}).
\end{equation}\\
Our task now is to show that $\mathcal{I} = \emptyset$. By contradiction, assume that $\mathcal{I} \neq \emptyset$. For each  $k \in \mathcal{I}$ and $\varepsilon > 0$, we consider a smooth function $\phi = \phi_{k,\varepsilon}\colon \mathbb{R}^4 \to \mathbb{R}$ such that 
$$\left\{
   \begin{array}{cc}
   \phi = 1, & \quad \textrm{in} \: B_{\varepsilon}(x_k), \\
   \phi = 0, & \quad \ \textrm{in} \: B^{c}_{2\varepsilon}(x_k), \\
   0 \le \phi \le 1, &  \ \ \ \ \ \ \ \textrm{in the remaining case}, \\
   |\nabla \phi| \le \frac{2}{\varepsilon},& \! \!\!  \! \textrm{in} \: \mathbb R^4.
   \end{array}
   \right. $$
Noticing that $I'_{\mu}(u_n)(u_n\phi) \rightarrow 0$ in $E^{\ast}$, we have
\begin{equation} \label{E:FALCC}
\lim_n \left[ \left(a+b\int |\nabla u_n|^2\right)\int |\nabla u_n|^2\phi-\int u_n^4\phi\right]+ o_\varepsilon(1) =0,
\end{equation}
where
\begin{align*}
    o_\varepsilon(1) = & \lim_n\left[  \int_{B_{2\varepsilon}(x_k)} V(x)u_n^2\phi- \int_{B_{2\varepsilon}(x_k)} hu_n\phi-\mu\int_{B_{2\varepsilon}(x_k)} K(x)|u_n|^q\phi \right.\\
    & \left. + \left( a+b\int |\nabla u_n|^2\right)\int_{B_{2\varepsilon}(x_k)} (\nabla u_n\cdot\nabla\phi)u_n\right].
\end{align*}
In fact,  combining Schwarz's inequality, H\"older's inequality and the compact embedding $E\hookrightarrow L^2_{loc}(\mathbb R^4)$, we get 
\begin{eqnarray*}
 &&\left|\displaystyle \lim_{n\to \infty} \left(a+b\int |\nabla u_n|^2 \right)\int_{B_{2\varepsilon}(x_k)}(\nabla u_n \cdot \nabla \phi) u_n \, dx  \right|\\
&&\leq  C\left(\int_{B_{2\varepsilon}(x_k)} u_0^2|\nabla \phi|^2 \: dx\right)^{1/2} \\
&& \leq  C\left(\int_{B_{2\varepsilon}(x_k)} u_0^4 \: dx\right)^{1/4}\left(\int_{ B_{2\varepsilon}(x_k)}|\nabla \phi|^4  \: dx \right)^{1/4} \\
&& \leq  C\left(\int_{ B_{2\varepsilon}(x_k)} u_0^4 \: dx \right)^{1/4},
\end{eqnarray*}
for a constant $0<C$ which does not depend on $\epsilon$, and where, in the last inequality, we use that $|\nabla \phi|\leq \dfrac{2}{\varepsilon}$. In addition, by using
analogous arguments as the previous above, we obtain
\begin{eqnarray*}
&& \left|\displaystyle \lim_{n\to \infty} \left(\int_{B_{2\varepsilon}(x_k)} V(x)u_n^2\phi- \int_{B_{2\varepsilon}(x_k)} hu_n\phi-\mu\int_{B_{2\varepsilon}(x_k)} K(x)|u_n|^q\phi \right)  \right|\\
&& \leq  \int_{B_{2\varepsilon}(x_k)} V(x)u_0^2+ \int_{B_{2\varepsilon}(x_k)} hu_0+\mu\int_{B_{2\varepsilon}(x_k)}  K(x)|u_0|^q .
\end{eqnarray*}
Thus, formula \eqref{E:FALCC} is justified. But then, by applying Lions'  Concentration Compactness Lemma on this formula, we derive
$$
      0\geq  \left[\left(a+b\int_{B_{2\varepsilon}(x_k)} \phi \: d\eta \right)\int_{ B_{2\varepsilon}(x_k)} \phi \: d\eta - \int_{ B_{2\varepsilon}(x_k)} \phi \:d\nu + o_\varepsilon(1)\right] .
$$
By passing to the limit as $\varepsilon\to 0$ and using relation \eqref{E:LCC} , we get
$$
0\geq (a+b\eta_k)\eta_k-\nu_k\geq b\eta_k^2-\nu_k\geq \nu_k(bS^2-1).
$$
Hence $$b\leq 1/S^{2},$$ which contradicts the hypothesis $(S)$. Therefore, $\mathcal{I}=\emptyset$ and the result is proved.
\end{proof}

\begin{proposition} \label{L:new}
Let $(u_n)$ be a bounded $(PS)_d$ sequence for $I_\mu$. Then, there exist $u_0\in E$ and a subsequence of $(u_n)$, also denoted by $(u_n)$, such that 
 $$
 u_n\to u_0\text{ in }E, \text{ as }n\to \infty.
 $$
 \end{proposition}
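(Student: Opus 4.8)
The plan is to upgrade the weak convergence to strong convergence in $E$, letting Proposition~\ref{L:convL4} do the heavy lifting against the critical term. By Proposition~\ref{L:convL4} (and its proof) we may pass to a subsequence so that $u_n\rightharpoonup u_0$ in $E$, with $u_0\in E$, and $u_n\to u_0$ in $L^4(\mathbb R^4)$. I would then record the convergences that come for free: since $(u_n)$ is bounded in $E$ and $E$ embeds compactly into $L^q_K(\mathbb R^4)$ for $2<q<4$, we also have $u_n\to u_0$ in $L^q_K(\mathbb R^4)$; and since $h\in L^{\frac43}(\mathbb R^4)$ defines a bounded linear functional on $E$ (via $E\hookrightarrow L^4(\mathbb R^4)$), we have $\int h(u_n-u_0)\to 0$.

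Next I would test $I'_{\mu}(u_n)$ against $u_n-u_0$. Since $I'_{\mu}(u_n)\to 0$ in $E^{\ast}$ and $(u_n-u_0)$ is bounded in $E$, we get $I'_{\mu}(u_n)(u_n-u_0)=o_n(1)$. Writing $t_n=\int|\nabla u_n|^2$ (a bounded sequence, with $a+bt_n\geq a>0$), the nonlinear terms are handled as follows: $\mu\int K(x)|u_n|^{q-2}u_n(u_n-u_0)=o_n(1)$ by H\"older's inequality in the weighted space $L^q_K$, using $|u_n|_{q;K}$ bounded and $|u_n-u_0|_{q;K}\to 0$; and the critical term $\int u_n^3(u_n-u_0)=o_n(1)$ by H\"older's inequality in $L^4$, using $|u_n|_4$ bounded and $|u_n-u_0|_4\to 0$ — this is the step where Proposition~\ref{L:convL4}, hence assumption $(S)$, is essential. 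Finally, $\int\nabla u_0\cdot\nabla(u_n-u_0)\to 0$ and $\int V(x)u_0(u_n-u_0)\to 0$, since $v\mapsto\int\nabla u_0\cdot\nabla v+\int V(x)u_0 v$ is a bounded linear functional on $E$ and $u_n\rightharpoonup u_0$.

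Putting these together, and splitting $\int\nabla u_n\cdot\nabla(u_n-u_0)=\int|\nabla(u_n-u_0)|^2+o_n(1)$ together with $\int V(x)u_n(u_n-u_0)=\int V(x)(u_n-u_0)^2+o_n(1)$ (the correction terms vanishing by weak convergence, and multiplication by the bounded factor $a+bt_n$ preserving $o_n(1)$), the identity $I'_{\mu}(u_n)(u_n-u_0)=o_n(1)$ collapses to
$$(a+bt_n)\int|\nabla(u_n-u_0)|^2+\int V(x)(u_n-u_0)^2=o_n(1).$$
Since both summands are nonnegative and $a+bt_n\geq a>0$, this forces $\int|\nabla(u_n-u_0)|^2\to 0$ and $\int V(x)(u_n-u_0)^2\to 0$, that is $\|u_n-u_0\|\to 0$, which is the assertion.

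The only genuinely delicate ingredient is the strong $L^4$ convergence, and that has already been secured in Proposition~\ref{L:convL4} via Lions' concentration–compactness lemma and hypothesis $(S)$; everything else here is routine bookkeeping with H\"older's inequality, the compact embedding $E\hookrightarrow L^q_K(\mathbb R^4)$, and weak convergence in $E$. So I expect no new obstacle in this proof beyond being careful to collect all the remainder terms correctly.
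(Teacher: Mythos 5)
Your proof is correct and follows essentially the same route as the paper: both test the derivative against $u_n-u_0$, dispose of the critical term via Proposition \ref{L:convL4}, of the weighted term via the compact embedding $E\hookrightarrow L^q_K(\mathbb R^4)$, and of the remaining linear terms by weak convergence and boundedness of $a+b\int|\nabla u_n|^2$, reducing everything to $\left(a+b\int|\nabla u_n|^2\right)\int|\nabla(u_n-u_0)|^2+\int V(x)(u_n-u_0)^2=o_n(1)$. The only cosmetic difference is that the paper works with $\left(I'_\mu(u_n)-I'_\mu(u_0)\right)(u_n-u_0)$ and the resulting difference terms, whereas you test $I'_\mu(u_n)$ alone and split the bilinear parts directly.
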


\begin{proof}
As seen in the previous Proposition, there exist $u_0\in E$ and a subsequence of $(u_n)$, which we also denote by $(u_n)$, such that 
$$
u_n\rightharpoonup u_0~\text{in}~E,~\text{as}~n\to \infty.
$$
Firstly, we notice that $(I'(u_n)-I'(u_0))(u_n-u_0)=o_n(1)$, that is,
\begin{multline}\label{bala}
    \left(a+b\int|\nabla u_n|^2\right)\int \left|\nabla(u_n-u_0)\right|^2+\int V(x)(u_n-u_0)^2 \\
    + b\left(\int |\nabla u_n|^2 -\int |\nabla u_0|^2 \right)\int \nabla u_0\cdot \nabla(u_n-u_0)\\-\mu\int K(x)(|u_n|^{q-2}u_n-|u_0|^{q-2}u_0)(u_n-u_0)\\ 
     -\int (u_n^3-u_0^3)(u_n-u_0)=o_n(1). 
\end{multline}
For what follows, we set
$$
I_n^1=\left(\int |\nabla u_n|^2 -\int |\nabla u_0|^2 \right) \int \nabla u_0\cdot \nabla(u_n-u_0), 
$$ 
$$
I_n^2=\int K(x)(|u_n|^{q-2}u_n-|u_0|^{q-2}u_0)(u_n-u_0)
$$
$$
\text{ and }~~ 
I_n^3=\int (u_n^3-u_0^3)(u_n-u_0).
$$
We claim that $I_n^1,~ I_n^2,~ I_n^3 \rightarrow 0,~ \text{as}~ n\rightarrow +\infty$. Recalling that $(u_n)$ is bounded in $D^{1,2}(\mathbb R^4)$, the first of these convergences follows immediately from the weak convergence $u_n\rightharpoonup u_0$ in $D^{1,2}(\mathbb R^4)$. Next, we shall verify the other two in the following steps.
\medskip

\noindent {\bf Step 1:}  $I_n^2=o_n(1)$. 
\medskip

By H\"older's inequality 
\begin{align*}
       |I_n^2|&\leq \int K(x)^{\frac{q-1}{q}}||u_n|^{q-2}u_n-|u_0|^{q-2}u_0|^{\frac{q}{q-1}}K(x)^{\frac{1}{q}}|u_n-u_0|\\
       &\leq \left|K^{\frac{q-1}{q}}||u_n|^{q-2}u_n-|u_0|^{q-2}u_0|\right|_{\frac{q}{q-1}}\left|K^{\frac{1}{q}}|u_n-u_0|\right|_q\\
      & = \left( \int K(x)\left||u_n|^{q-2}u_n-|u_0|^{q-2}u_0\right|^{\frac{q}{q-1}}\right)^{\frac{q-1}{q}}\left(\int K(x)|u_n-u_0|^q\right)^{\frac{1}{q}}.
\end{align*}
Since $E\hookrightarrow L^q_K(\mathbb R^4)$, we get $(u_n)$ is bounded in $L^q_K(\mathbb R^4)$ and there holds
\begin{multline*}
\left(\int K(x)||u_n|^{q-2}u_n-|u_0|^{q-2}u_0|^{\frac{q}{q-1}}\right)^{\frac{q-1}{q}}\\
\leq \left(\int K(x)\left(|u_n|^{q-1}+|u_0|^{q-1}\right)^{\frac{q}{q-1}}\right)^{\frac{q-1}{q}}\\
\leq 2\left(\int K(x)|u_n|^q+\int K(x)|u_0|^q\right)^{\frac{q-1}{q}}\leq c_2,
\end{multline*}
for some constant $c_2>0$. So that,
$$
|I_n^2|\leq c_2|u_n-u_0|_{q,K},
$$
and, by the compact imersion $E\hookrightarrow L^q_K(\mathbb R^4)$, we derive $I_n^2=o_n(1)$,  as we claimed. \bigskip

\noindent {\bf Step 3:} $I_n^3=o_n(1) $. 
\medskip

In fact,  by using H\"older's inequality
$$
      |I_n^3|\leq \int |u_n^3-u_0^3||u_n-u_0| \leq |u_n^3-u_0^3|_{\frac{4}{3}} |u_n-u_0|_4\leq \left(\int |u_n^3-u_0^3|^{\frac{4}{3}} \right)^{\frac{3}{4}}|u_n-u_0|_4. 
$$
Since $(u_n)$ is bounded in $L^4(\mathbb R^4)$, it is fulfilled
$$
\left(\int |u_n^3-u_0^3|^{\frac{4}{3}}\right)^{\frac{3}{4}} \leq 2\left( \int |u_n|^4+ \int |u_0|^4\right)^{\frac{3}{4}}\leq c_3,
$$
for some constant $c_3>0$. So that,
$$
 |I_n^3|\leq c_3|u_n-u_0|_4
$$ 
and, by Proposition \ref{L:cl4},  $I_n^3=o_n(1)$, which concludes the verification of the claim.
\medskip

From the above convergences, \eqref{bala},  it follows that
$$
     \left(a+b\int|\nabla u_n|^2\right)\int \left|\nabla(u_n-u_0)\right|^2+\int V(x)(u_n-u_0)^2=o_n(1).
$$
Thereby, we derive
$$
\|u_n-u_0\|=\int \left|\nabla(u_n-u_0)\right|^2+\int V(x)(u_n-u_0)^2\leq o_n(1),
$$
which proves that $u_n\to u_0\in E$, as $n\to \infty$.
\end{proof}

\section{Proofs of  the main results}

In this section, we will prove Theorems \ref{mth} and \ref{mth1}. 

\subsection{The proof of Theorem \ref{mth}}

Let $\mu^*>0$  given by  Proposition \ref{L:psb}. For $\mu> \mu^*$ and $|h|_{\frac{4}{3}}<\delta_\mu$, there exists (almost everywhere) a bounded Palais-Smale sequence $(u_n)$ in $E$ for $I_{\mu}$ at mountain-pass level $c_{\mu}$. By Proposition \ref{L:new}, there exists $u_0\in E$ and subsequence of $(u_n)$, which we also denote by $(u_n)$ such that 
$$
 u_n\to u_0\text{ in }E, \text{ as }n\to \infty.
$$
Since $I_\mu$ is $C^1(E,\mathbb R)$, , we derive $I_\mu(u_0)=c_\mu$ and $I'_\mu(u_0)=0$. So that, $u_0$ is a solution to $(P_{\mu})$ with positive energy. Thus, the Theorem  \ref{mth} is proved. \qed  

\subsection{The proof of Theorem \ref{mth1}}

We will prove here the minimization of $I_\mu$ constrained to $B_\tau$, by using the following result due to Ekeland (see \cite{Ekeland}):

\begin{proposition} [Ekeland's variational principle - weak form] \label{P:EVP}
Let $(X,d)$ be a complete metric space and $\Phi\colon X\to\mathbb R\cup\left\{ \infty\right\}$ be a lower semicontinuous functional, which is bounded below.  Then, for each given $\varepsilon>0$, there exists $u_\varepsilon\in X$ such that
$$
\Phi(u_\varepsilon)< \inf_X \Phi+\varepsilon
$$
and
$$
\Phi(u_\varepsilon)<\Phi(u)+\varepsilon d(u_\varepsilon,u),~u\neq u_\varepsilon.
$$
\end{proposition}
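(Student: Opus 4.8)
The plan is to prove Proposition \ref{P:EVP} by Ekeland's classical iteration argument: build a decreasing chain of nonempty closed sets whose diameters shrink to zero, and take $u_\varepsilon$ to be the unique point in their intersection (using completeness of $X$). Throughout, $\varepsilon>0$ is fixed, and we may assume $\inf_X\Phi<+\infty$, so there is $w\in X$ with $\Phi(w)<\inf_X\Phi+\varepsilon$; note $\Phi(w)<+\infty$.

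First I would introduce, for each $u\in X$ with $\Phi(u)<+\infty$, the set
$$
S(u)=\left\{v\in X:\ \Phi(v)+\varepsilon\, d(u,v)\le\Phi(u)\right\},
$$
and record three elementary facts: (i) $u\in S(u)$, so $S(u)\neq\emptyset$; (ii) $S(u)$ is closed, since $v\mapsto\Phi(v)+\varepsilon d(u,v)$ is lower semicontinuous (sum of an l.s.c.\ function and a continuous one), hence has closed sublevel sets; (iii) if $v\in S(u)$ then $S(v)\subseteq S(u)$, obtained by adding the two defining inequalities and applying the triangle inequality $d(u,z)\le d(u,v)+d(v,z)$.

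Next, I would construct the sequence. Put $u_0=w$; given $u_n$ (with $\Phi(u_n)<+\infty$), the number $m_n:=\inf_{S(u_n)}\Phi$ is finite because $\Phi$ is bounded below, so I can choose $u_{n+1}\in S(u_n)$ with $\Phi(u_{n+1})\le m_n+2^{-n}$. By fact (iii), the sets $S(u_0)\supseteq S(u_1)\supseteq\cdots$ are nested, and by (i)--(ii) they are nonempty and closed. The key estimate is the shrinking of diameters: for any $v\in S(u_{n+1})\subseteq S(u_n)$ one has $\Phi(v)\ge m_n\ge\Phi(u_{n+1})-2^{-n}$, while $\varepsilon\, d(u_{n+1},v)\le\Phi(u_{n+1})-\Phi(v)\le 2^{-n}$, so $\operatorname{diam}S(u_{n+1})\le 2^{1-n}/\varepsilon\to 0$. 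By completeness of $X$ and Cantor's intersection theorem, $\bigcap_{n\ge 0}S(u_n)=\{u_\varepsilon\}$ for a unique $u_\varepsilon\in X$. Finally I would check the two conclusions: since $u_\varepsilon\in S(u_0)=S(w)$, $\Phi(u_\varepsilon)\le\Phi(u_\varepsilon)+\varepsilon d(w,u_\varepsilon)\le\Phi(w)<\inf_X\Phi+\varepsilon$; and if some $u\neq u_\varepsilon$ had $\Phi(u)+\varepsilon d(u_\varepsilon,u)\le\Phi(u_\varepsilon)$, i.e.\ $u\in S(u_\varepsilon)$, then from $u_\varepsilon\in S(u_n)$ and (iii) we would get $S(u_\varepsilon)\subseteq S(u_n)$ for all $n$, hence $u\in\bigcap_n S(u_n)=\{u_\varepsilon\}$, a contradiction; thus $\Phi(u_\varepsilon)<\Phi(u)+\varepsilon d(u_\varepsilon,u)$ for every $u\neq u_\varepsilon$.

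This is a classical statement, so there is no deep obstacle; the one point that genuinely needs care is the diameter estimate, which forces the specific choice of $u_{n+1}$ as a near-minimizer of $\Phi$ on $S(u_n)$ — picking an arbitrary element of $S(u_n)$ would not make the sets contract. Everything else is routine bookkeeping with the triangle inequality, lower semicontinuity, and completeness.
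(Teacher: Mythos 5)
Your proof is correct: it is the classical iteration argument (nested closed sets $S(u_n)$ with near-minimizers, shrinking diameters, Cantor intersection), and the one delicate point you flag — choosing $u_{n+1}$ as a near-minimizer of $\Phi$ on $S(u_n)$ to force the diameters to contract — is handled properly, as is the implicit properness assumption $\inf_X\Phi<+\infty$. The paper itself does not prove this proposition at all; it simply quotes it and cites Ekeland's article, so your argument matches the standard proof in the cited reference rather than anything in the paper.
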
 
\bigskip

In fact, applying it to $X=B_\tau$ and $\Phi=I_\mu$, in light of the Proposition \ref{L:IBB} and since $I_{\mu}$ is lower semicontinuous in $B_\tau$, using Proposition \ref{P:EVP} provides a sequence $(v_n)$ in  $B_\tau$ such that
$$
I_{\mu}(v_n)<\nu_{\mu}+\frac{1}{n}
$$
and
$$
I_{\mu}(v_n)<I_{\mu}(u)+\frac{1}{n}\|v_n-u\|,~ u\in B_\tau,~ u\neq v_n. 
$$
\medskip

The proof of Theorem \ref{mth1} is a consequence of the following Lemma.

\begin{lemma}
Let $\mu>0$. If  $0<|h|_{\frac{4}{3}}<\delta_\mu$, then $(v_n)$ is a $(PS)_{\nu_{\mu}}$ sequence for $I_{\mu}$.
\end{lemma}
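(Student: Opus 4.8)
The plan is to show that the Ekeland sequence $(v_n)\subset B_\tau$ produced above is a genuine Palais–Smale sequence at the level $\nu_\mu$, i.e.\ that $I_\mu(v_n)\to\nu_\mu$ and $I_\mu'(v_n)\to 0$ in $E^*$. The convergence $I_\mu(v_n)\to\nu_\mu$ is immediate from the first Ekeland inequality $\nu_\mu\le I_\mu(v_n)<\nu_\mu+1/n$, so the real content is the derivative estimate. The key preliminary observation is that the $v_n$ eventually lie in the \emph{interior} of $B_\tau$: since $\nu_\mu<0$ (Proposition \ref{L:IBB}, using $h\neq 0$) while Lemma \ref{L:geo1} gives $I_\mu(u)\ge\sigma>0$ whenever $\|u\|=\tau$ (valid because $|h|_{\frac43}<\delta_\mu$), the inequality $I_\mu(v_n)<\nu_\mu+1/n<0$ for $n$ large forces $\|v_n\|<\tau$, so $v_n$ is an interior point.

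Next I would exploit the interiority to turn the second Ekeland inequality into a bound on $I_\mu'(v_n)$. Fix $\varphi\in E$ with $\|\varphi\|=1$ and, for small $t>0$, set $u=v_n+t\varphi$; since $v_n$ is interior, $u\in B_\tau$ for $t$ sufficiently small, and $u\neq v_n$. The Ekeland inequality then reads
$$
I_\mu(v_n)<I_\mu(v_n+t\varphi)+\frac{1}{n}\,\|t\varphi\|=I_\mu(v_n+t\varphi)+\frac{t}{n},
$$
whence $\dfrac{I_\mu(v_n+t\varphi)-I_\mu(v_n)}{t}>-\dfrac1n$. Letting $t\to0^+$ and using that $I_\mu\in C^1(E,\mathbb R)$ gives $I_\mu'(v_n)\varphi\ge-1/n$. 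Replacing $\varphi$ by $-\varphi$ yields $I_\mu'(v_n)\varphi\le 1/n$, hence $|I_\mu'(v_n)\varphi|\le 1/n$ for every unit vector $\varphi$, i.e.\ $\|I_\mu'(v_n)\|_{E^*}\le 1/n\to0$. Combined with $I_\mu(v_n)\to\nu_\mu$, this is exactly the assertion that $(v_n)$ is a $(PS)_{\nu_\mu}$ sequence.

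The main obstacle is the interiority step: one must be sure that the mountain-pass-type estimate of Lemma \ref{L:geo1} applies on the whole sphere $\{\|u\|=\tau\}$ under the hypothesis $0<|h|_{\frac43}<\delta_\mu$, and that $\nu_\mu<0$ strictly (which needs $h\not\equiv0$, hence the hypothesis $|h|_{\frac43}>0$ in the statement); together these separate the sublevel set $\{I_\mu<0\}$ from the boundary of $B_\tau$. Once interiority is in hand, the passage from the Ekeland inequality to the derivative bound is the standard one-sided difference-quotient argument and presents no real difficulty. I would also remark that this Lemma, combined with the compactness Proposition \ref{L:new}, immediately yields a subsequence $v_n\to v_0$ in $E$ with $I_\mu(v_0)=\nu_\mu<0$ and $I_\mu'(v_0)=0$, so $v_0$ is the desired negative-energy weak solution of $(P_\mu)$, completing the proof of Theorem \ref{mth1}.
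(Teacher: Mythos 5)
Your proposal is correct and follows essentially the same route as the paper's proof: you establish that the $v_n$ lie in the interior of $B_\tau$ by combining $\nu_\mu<0$ (which uses $h\neq 0$) with the boundary estimate $I_\mu\geq\sigma>0$ on $\{\|u\|=\tau\}$ from Lemma \ref{L:geo1}, and then run the standard one-sided difference-quotient argument on Ekeland's second inequality to get $\|I_\mu'(v_n)\|_{E^*}\leq 1/n$. Your write-up is in fact slightly more explicit than the paper's (which only cites Lemma \ref{L:geo1} for interiority without spelling out the sign comparison), but the content is the same.
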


\begin{proof}
Since  $I_{\mu}(v_n)\to  \nu_{\mu}<0$,
without loss of generality we can assume that $I_{\mu}(v_n)<0,~\forall n\in \mathbb N$. From Lemma \ref{L:geo1}, we have $v_n\in \mathring{B_\tau}$, where $\mathring{B_\tau}$ denotes the interior of $B$. Therefore, for $v\in E$ such that $\|v\|\leq 1$, and for any small positive value of $\theta\in\mathbb R$, we get
$$
v_n+\theta v\in \mathring{B_\tau}~\text{ and } ~ v_n+\theta v\neq v_n.
$$
But then
$$
I_{\mu}(v_n)<I_{\mu}(v_n+\theta v)+\frac{\theta}{n}.
$$
The differentiabilty of $I_{\mu}$ implies that
$$
I'_{\mu}(v_n)v\geq -\frac{1}{n}.
$$
Replacing $v$ by $-v$ we obtain
$$
I'_{\mu}(v_n)v\leq \frac{1}{n}.
$$
Thus, $\|I'_{\mu}(v_n)\|_{E^\ast}\to 0$.
\end{proof}
\bigskip

Since $(v_n)$ is a bounded $(PS)_{\nu_\mu}$ sequence for $I_\mu$, by Proposition \ref{L:new}, there exist $v_0\in E$ and subsequence of $(v_n)$, which we also denote by $(v_n)$, such that 
$$
 v_n\to v_0\text{ in }E, \text{ as }n\to \infty.
$$
We derive $I_\mu(v_0)=\nu_\mu$ and $I'_{\mu}(v_0)=0$. So that, $v_0$ is a solution to $(P_\mu)$ with negative energy. Thus, the Theorem  \ref{mth1} is proved. \qed

\end{document}